\numberwithin{equation}{section}
\theoremstyle{plain}
 \newtheorem{thm}{Theorem}[section]
 \newtheorem{prop}[thm]{Proposition}
\theoremstyle{definition}
 \newtheorem{defn}[thm]{Definition}
 \newtheorem{ex}[thm]{Example}
\newcommand{\al}{\alpha}
\newcommand{\bt}{\beta}
\newcommand{\gm}{\gamma}
\newcommand{\Gm}{\Gamma}
\newcommand{\dl}{\delta}
\newcommand{\ep}{\varepsilon}
\newcommand{\et}{\eta}
\newcommand{\ld}{\lambda}
\newcommand{\Ld}{\Lambda}
\newcommand{\sg}{\sigma}
\newcommand{\ph}{\varphi}
\newcommand{\Ph}{\Phi}
\newcommand{\Ps}{\Psi}
\newcommand{\rh}{\rho}
\newcommand{\dar}{\downarrow}
\newcommand{\uar}{\uparrow}
\newcommand{\mcal}{\mathcal}
\newcommand{\mrm}{\mathrm}
\newcommand{\mfr}{\mathfrak}
\newcommand{\wh}{\widehat}
\newcommand{\la}{\langle}
\newcommand{\ra}{\rangle}
\newcommand{\ts}{\textstyle}
\newcommand{\C}{\mathbb{C}}
\newcommand{\R}{\mathbb{R}}
\newcommand{\les}{\leqslant}
\newcommand{\ges}{\geqslant}
\newcommand{\law}{\mathcal L}
\newcommand{\cl}{\colon}
\newcommand{\Rpl}{\mathbb{R}_{+}^{\circ}}
\renewcommand{\labelenumi}{(\roman{enumi})}
\begin{document}
\setlength{\baselineskip}{16pt}
\setlength{\parindent}{1.8pc}
\allowdisplaybreaks
\hyphenation{self-de-com-pos-able self-decom-pos-abil-ity de-com-pos-abil-ity}

{\bf
{\large
\noindent
Weak drifts of infinitely divisible distributions\\
and their applications}

\vspace{5mm}
\noindent
Ken-iti Sato\footnote{Hachiman-yama 1101-5-103, Tenpaku-ku, Nagoya, 
468-0074 Japan.} \footnote{To whom correspondence should be addressed. 
E-mail: ken-iti.sato@nifty.ne.jp} and 
Yohei Ueda\footnote{Department of Mathematics, Keio University, 3-14-1, Hiyoshi, 
Kohoku-ku, Yokohama 223-8522, Japan}}

\vspace{5mm}
\begin{quotation}
{\small
Weak drift of an infinitely divisible distribution
$\mu$ on $\R^d$ is defined by analogy with weak mean; 
properties and applications of weak drift are given.  
When $\mu$ has
no Gaussian part, the weak drift of $\mu$ equals the minus of the weak mean of the
inversion $\mu'$ of $\mu$.  Applying the concepts of having weak drift $0$
and of having weak drift $0$ absolutely, the ranges, the absolute ranges, 
and the limit of the
ranges of iterations are described for some stochastic integral mappings.
For L\'evy processes the concepts of weak mean and weak drift 
are helpful in giving necessary and sufficient conditions 
for the weak law of large numbers and for the weak version of Shtatland's theorem
on the behavior near $t=0$; those conditions are obtained from each other
through inversion.

\noindent {\rm KEY WORDS:}   Infinitely divisible distribution; weak mean; 
weak drift; inversion; stochastic integral mapping; weak law of large numbers; 
Shtatland's theorem.}
\end{quotation}

\section{Introduction}

This paper introduces the notion of weak drift of an infinitely 
divisible distribution
and applies it, first, to the relations between inversions of infinitely divisible 
distributions and conjugates of stochastic integral mappings studied in Sato \cite{S11b} and,
second, to the weak law of large numbers for L\'evy processes and the weak version of 
Shtatland's theorem on the behavior of L\'evy processes near $t=0$.

The basic notions in this paper are as follows. Let $ID=ID(\R^d)$ be the class of
infinitely divisible distributions on $\R^d$.  
The L\'evy--Khintchine triplet $(A_{\mu},\nu_{\mu},\gm_{\mu})$ of $\mu\in ID$ 
consisting of the Gaussian covariance matrix $A_{\mu}$, the L\'evy measure
$\nu_{\mu}$, and the location parameter $\gm_\mu$ is given by the formula
\begin{equation}\label{1.1}
\wh\mu(z)=\exp\Bigl[ -\tfrac12 \langle z,A_{\mu} z\rangle +\int_{\mathbb{R}^d}
(e^{i\langle z,x\rangle}-1-i\langle z,x\rangle
1_{\{|x|\les 1\}}(x) ) \nu_{\mu}(dx)
+i\langle \gamma_{\mu},z\rangle\Bigr]
\end{equation}
for the characteristic function $\wh\mu(z)$, $z\in\R^d$, of $\mu$.
Recall that $\nu_{\mu}(\{ 0\})=0$ and $\int_{\R^d} (|x|^2\land 1)\nu_{\mu}(dx)
<\infty$.  If $\int_{|x|\les1}|x|\nu_{\mu}(dx)<\infty$, then
\[
\wh\mu(z)=\exp\Bigl[-\tfrac12 \langle z,A_{\mu} z\rangle +\int_{\mathbb{R}^d}
(e^{i\langle z,x\rangle}-1) \nu_{\mu}(dx)+i\langle \gamma_{\mu}^0,z\rangle
\Bigr],
\]
where $\gm_{\mu}^0$ is called the drift of $\mu$.  If $\int_{|x|>1}
|x|\nu_{\mu}(dx)<\infty$, then 
\[
\wh\mu(z)=\exp\Bigl[-\tfrac12 \langle z,A_{\mu} z\rangle +\int_{\mathbb{R}^d}
(e^{i\langle z,x\rangle}-1-i\langle z,x\rangle) \nu_{\mu}(dx)+i\langle 
m_{\mu},z\rangle\Bigr],
\]
where $m_{\mu}$ is the mean of $\mu$.
Let $ID_0=\{\mu\in ID\colon A_{\mu}=0\}$. 
For $\mu\in ID_0$ the inversion $\mu'\in ID_0$ of $\mu$ is defined as
$\nu_{\mu'}(B)=\int_{\R^d\setminus \{0\}} 1_B(|x|^{-2} x)|x|^2 \nu_{\mu}
(dx)$ for all $B$ in $\mcal B(\R^d)$, the class of Borel sets in $\R^d$,
and $\gm_{\mu'}=-\gm_{\mu}+\int_{|x|=1} x\nu_{\mu}(dx)$.
Any $\mu\in ID_0$ has its inversion $\mu'\in ID_0$ and we have $\mu''=\mu$.
The inversion $\mu'$ has drift $\gm_{\mu'}^0$ if and only if $\mu$ has
mean $m_{\mu}$; we have $\gm_{\mu'}^0=-m_{\mu}$.
Many other properties of the inversion are given in \cite{S07,S11b}.
For example, for $0<\al<2$, $\mu$ is $\al$-stable if and only if 
$\mu'$ is $(2-\al)$-stable, and
$\mu$ is strictly $\al$-stable if and only if 
$\mu'$ is strictly $(2-\al)$-stable.

We use the following notation throughout this paper:  $\mfr C_0=\mfr C\cap ID_0$ 
for any $\mfr C\subset ID$;  $\mfr C'=\{\mu'\colon \mu\in\mfr C\}$ for 
any $\mfr C\subset ID_0$.

The notions that a distribution $\mu\in ID$ has weak mean $m_{\mu}$
and that $\mu\in ID$ has weak mean $m_{\mu}$ absolutely are introduced
in \cite{S10}.
In Section 2 of this paper we will recall those definitions 
and then define 
the notions that a distribution $\mu\in ID$ has weak drift $\gm_{\mu}^0$
and that $\mu\in ID$ has weak drift $\gm_{\mu}^0$ absolutely. Properties of weak means and
weak drifts are in parallel.  Moreover we will prove that $\mu'$ has 
weak drift $\gm_{\mu'}^0$ if and only if $\mu$ has weak
mean $m_{\mu}$ and that $\gm_{\mu'}^0=-m_{\mu}$.

Let $\{X_t^{(\rh)}\cl t\ges0\}$ be a L\'evy process on $\R^d$ such that 
$\law (X_1^{(\rh)})$, the distribution of $X_1^{(\rh)}$, equals $\rh$.
We consider improper stochastic integrals with respect to $\{X_t^{(\rh)}\}$
in two cases.
{\def\labelenumi{(\arabic{enumi})}
\begin{enumerate}
\item Let $0<c\les\infty$ and let $f(s)$ be a locally square-integrable function
on $[0,c)$.  We say that the improper stochastic integral 
$\int_0^{c-} f(s)dX_s^{(\rh)}$ is definable if  
$\int_0^q f(s)dX_s^{(\rh)}$ is convergent in probability as  
$q\uar c$.  Define the mapping $\Ph_f$ from $\rh$ to
$\Ph_f\,\rh=\law \bigl( \int_0^{c-} f(s)dX_s^{(\rh)} \bigr)$; its domain 
$\mfr D(\Ph_f)$ is the class of $\rh\in ID$ such that
$\int_0^{c-} f(s)dX_s^{(\rh)}$ is definable.
\item Let $0<c<\infty$ and let $f(s)$ be a locally square-integrable function
on $(0,c]$.  We say that the improper stochastic integral 
$\int_{0+}^c f(s)dX_s^{(\rh)}$ is definable if 
$\int_p^c f(s)dX_s^{(\rh)}$ is convergent in probability as   
$p\dar 0$.  Define the mapping $\Ph_f$ from $\rh$ to
$\Ph_f\,\rh=\law \bigl( \int_{0+}^c f(s)dX_s^{(\rh)} \bigr)$; its domain 
$\mfr D(\Ph_f)$ is the class of $\rh\in ID$ such that
$\int_{0+}^c f(s)dX_s^{(\rh)}$ is definable.
\end{enumerate}}
\noindent In any of the cases (1) and (2),  $\Ph_f$ is called 
a stochastic integral mapping.  
Its range $\mfr R(\Ph_f)=\{\Ph_f\rh\cl \rh\in\mfr D(\Ph_f)\}$
is a subclass of $ID$.
If $c<\infty$ and $\int_0^c f(s)^2 ds<\infty$, then
$\int_0^{c-} f(s)dX_s^{(\rh)}=\int_{0+}^c f(s)dX_s^{(\rh)}=\int_0^c
f(s)dX_s^{(\rh)}$ for all $\rh\in ID$.
If $f=f_h$ is defined from a
function $h$ in some way (see Section 3 for precise formulation),
the stochastic integral mapping $\Ph_f$ is denoted by $\Ld_h$ as in \cite{S11b}.
By the transformation of $h(u)$ to $h^*(u)=h(u^{-1})u^{-4}$ the conjugate 
of $\Ld_h$ is defined by 
$\Ld_{h^*}$ and denoted by $(\Ld_h)^*$ or $\Ld_h^*$. 
Thus $\Ld_h^*=\Ld_{h^*}=\Ph_{f_{h^*}}$ and $(\Ld_h^*)^*=\Ld_h$. 
The relations of $\mfr D(\Ld_h)$ and 
$\mfr R(\Ld_h)$ with $\mfr D(\Ld_h^*)$ and $\mfr R(\Ld_h^*)$   
are studied in \cite{S11b}.
It is closely connected with the inversion.  
Thus $\rh\in \mfr D(\Ld_h)_0$ and $\Ld_h\rh=\mu$ if and only if
$\rh'\in \mfr D(\Ld_h^*)_0$ and $\Ld_h^*\rh'=\mu'$.
In the description of $\mfr R(\Ld_h)$ and the range $\mfr R^0
(\Ld_h)$ of absolutely definable $\Ld_h$ (see Section 3 for definition), 
the conditions of having weak mean $0$ 
and of having weak mean $0$ absolutely are sometimes useful, as is shown 
in \cite{S10,S11b}. We will show in Section 3 that
the conditions of having weak drift $0$ and of having weak drift $0$ absolutely are
useful in the description of $\mfr R(\Ld_h^*)$ and $\mfr R^0(\Ld_h^*)$.
In Section 4 a similar fact will be shown in the description of 
$\mfr R_{\infty}(\Ld_h)$ and
$\mfr R_{\infty}(\Ld_h^*)$, the limit of the ranges of iterations of $\Ld_h$
and $\Ld_h^*$, respectively.

In Section 5, we will give a necessary and sufficient condition
for a L\'evy process $\{X_t^{(\mu)}\}$ on $\R^d$ to satisfy the weak law of 
large numbers as $t\to\infty$ is that $\mu$ has weak mean and satisfies 
the condition 
$\lim_{t\to\infty} t\int_{|x|>t} \nu_{\mu}(dx)=0$.  On the other hand we will
show, using the inversion, that a L\'evy process $\{X_t^{(\mu)}\}$ 
without Gaussian part 
satisfies
the weak version of Shtatland's theorem \cite{Sh65} (that is, $t^{-1}X_t^{(\mu)}$ 
converges in law to some constant as $t\dar 0$) if and only if the L\'evy 
process $\{X_t^{(\mu')}\}$ satisfies the weak law of 
large numbers.  Thus it will be  shown that a necessary and sufficient condition
for a L\'evy process $\{X_t^{(\mu)}\}$ without Gaussian part 
to satisfy the weak version of 
Shtatland's theorem is that $\mu$ has a weak drift and satisfies
$\lim_{\ep\dar 0} \ep^{-1}\int_{|x|\les\ep} |x|^2 \nu_{\mu}(dx)=0$.


\section{Weak drifts of infinitely divisible distributions}

We say that $\mu\in ID$ has weak mean in $\R^d$ if 
\begin{equation}\label{2.1}
\int_{1<|x|\les a} x\nu_{\mu}(dx)\text{ is convergent in $\R^d$ as $a\to\infty$}.
\end{equation}
We say that $\mu\in ID$ has weak mean $m_{\mu}$ if \eqref{2.1} holds and 
$\wh\mu(z)$ satisfies
\begin{equation}\label{2.1a}
\wh\mu(z)=\exp\Bigl[ -\tfrac12 \langle z,A_{\mu} z\rangle +\lim_{a\to\infty}
\int_{|x|\les a}(e^{i\langle z,x\rangle}-1-i\langle z,x\rangle) \nu_{\mu}(dx)
+i\langle m_{\mu},z\rangle\Bigr].
\end{equation}
If $\mu\in ID$ has mean $m_{\mu}$, then $\mu$ has weak mean $m_{\mu}$.
If $\mu\in ID$ has weak mean $m_{\mu}$, then $m_{\mu}=\gm_{\mu}+\lim_{a\to\infty}
\int_{1<|x|\les a} x\nu_{\mu}(dx)$. 
Let $(\bar\nu_{\mu}(dr),\ld_r^{\mu}(d\xi))$ be a spherical decomposition of 
$\nu_{\mu}$, that is,
\[
\nu_{\mu}(B)=\int_{\Rpl}\bar\nu_{\mu}(dr)\int_{S} 1_B(r\xi)\ld_r^{\mu}(d\xi),
\qquad B\in\mcal B(\R^d),
\]
where $\bar\nu_{\mu}$ is a $\sg$-finite measure on $\Rpl=(0,\infty)$ with
$\bar\nu_{\mu}(\Rpl)\ges 0$  and
$\{\ld_r^{\mu}\cl r\in\Rpl\}$ is a measurable family of $\sg$-finite measures
on $S=\{\xi\in\R^d\cl |\xi|=1\}$ with $\ld_r^{\mu}(S)>0$ 
($S$ is the unit sphere if $d\ges 2$ or the two-point set 
$\{1,-1\}$ if $d=1$); the decomposition is unique up to a change 
to $(c(r) \bar\nu_{\mu}(dr),
c(r)^{-1}\ld_r^{\mu}(d\xi))$ with a positive, finite, measurable function
$c(r)$ on $\Rpl$.   
We say that $\mu\in ID$ has weak
mean in $\R^d$ absolutely if
\begin{equation}\label{2.2}
\int_{(1,\infty)}r\bar\nu_{\mu}(dr)\left|\int_{S} \xi\ld_r^{\mu}(d\xi)\right|<\infty.
\end{equation}
We say that $\mu\in ID$ has weak mean $m_{\mu}$ absolutely if \eqref{2.2} holds
and $\mu$ has weak mean $m_{\mu}$.  These notions are introduced in \cite{S10}.

Now we give the following definitions.

\begin{defn}\label{2d1}
We say that $\mu\in ID$ has \emph{weak drift} in $\R^d$ if 
\begin{equation}\label{2.3}
\int_{\ep<|x|\les 1} x\nu_{\mu}(dx)\text{ is convergent in $\R^d$ as $\ep\dar 0$}.
\end{equation}
We say that $\mu\in ID$ has weak drift $\gm_{\mu}^0$ if \eqref{2.3} holds and 
\[
\wh\mu(z)=\exp\Bigl[ -\tfrac12 \langle z,A_{\mu} z\rangle +\lim_{\ep\dar 0}
\int_{|x|>\ep}(e^{i\langle z,x\rangle}-1) \nu_{\mu}(dx)
+i\langle \gm_{\mu}^0,z\rangle\Bigr].
\]
\end{defn}

Property \eqref{2.3} is equivalent to saying that, for each $z\in\R^d$,
$\int_{|x|>\ep} (e^{i\la z,x\ra} -1) \nu_{\mu}(dx)$ is convergent in $\C$ 
as $\ep\dar 0$.

\begin{defn}\label{2d2}
We say that $\mu\in ID$ has \emph{weak drift} in $\R^d$ \emph{absolutely} if 
\begin{equation}\label{2.4}
\int_{(0,1]}r\bar\nu_{\mu}(dr)\left|\int_{S} \xi\ld_r^{\mu}(d\xi)\right|<\infty,
\end{equation}
where $(\bar\nu_{\mu}(dr),\ld_r^{\mu}(d\xi))$ is a spherical decomposition of
$\nu_{\mu}$.  
We say that $\mu\in ID$ has weak drift $\gm_{\mu}^0$ absolutely if \eqref{2.4} holds and 
$\mu$ has weak drift $\gm_{\mu}^0$.
\end{defn}

We remark that, if \eqref{2.4} holds for some spherical decomposition of
$\nu_{\mu}$, then it holds for any spherical decomposition of
$\nu_{\mu}$. 

\begin{prop}\label{2p1}
If $\mu\in ID$ has weak drift $\gm_{\mu}^0$, then $\gm_{\mu}^0=\gm_{\mu}
-\lim_{\ep\dar0}\int_{\ep<|x|\les 1} x\nu_{\mu}(dx)$.
\end{prop}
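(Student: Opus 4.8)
This is a compatibility statement relating the weak drift $\gamma_\mu^0$ to the location parameter $\gamma_\mu$.

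The plan is to compare two representations of $\widehat\mu(z)$ and match terms. The standard Lévy–Khintchine formula \eqref{1.1} uses the truncation function $1_{\{|x|\les 1\}}$, while the definition of weak drift (Definition 2.4) uses a representation with no compensation (the integrand $e^{i\langle z,x\rangle}-1$) together with the location parameter $\gamma_\mu^0$. Since both represent the same characteristic function, and since the logarithm of the characteristic function is determined up to the usual conventions, I would subtract one exponent from the other and show that the difference of the location terms must equal the difference coming from the compensating integral.

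First I would write out the two exponents explicitly. From \eqref{1.1} the exponent is $-\tfrac12\langle z,A_\mu z\rangle + \int_{\R^d}(e^{i\langle z,x\rangle}-1-i\langle z,x\rangle 1_{\{|x|\les 1\}})\nu_\mu(dx) + i\langle\gamma_\mu,z\rangle$, while from Definition 2.2 (assuming \eqref{2.3}) the exponent is $-\tfrac12\langle z,A_\mu z\rangle + \lim_{\ep\dar 0}\int_{|x|>\ep}(e^{i\langle z,x\rangle}-1)\nu_\mu(dx) + i\langle\gamma_\mu^0,z\rangle$. The Gaussian terms cancel, so the key step is to compute the difference of the two integral expressions. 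The plan is to split the integral \eqref{1.1} at $|x|=1$ and regularize the small-$x$ part by introducing the cutoff $|x|>\ep$: I would write $\int_{\R^d}(e^{i\langle z,x\rangle}-1-i\langle z,x\rangle 1_{\{|x|\les 1\}})\nu_\mu(dx)$ as $\int_{|x|>1}(e^{i\langle z,x\rangle}-1)\nu_\mu(dx) + \lim_{\ep\dar 0}\int_{\ep<|x|\les 1}(e^{i\langle z,x\rangle}-1-i\langle z,x\rangle)\nu_\mu(dx)$, where the small-$x$ integral converges absolutely because $|e^{i\langle z,x\rangle}-1-i\langle z,x\rangle|=O(|x|^2)$ and $\int_{|x|\les 1}|x|^2\nu_\mu(dx)<\infty$.

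The comparison with the weak-drift representation hinges on the term $\lim_{\ep\dar 0}\int_{\ep<|x|\les 1} i\langle z,x\rangle\,\nu_\mu(dx)$, which exists precisely because \eqref{2.3} holds (the hypothesis that $\mu$ has weak drift). Matching the regularized forms gives the identity
\[
i\langle\gamma_\mu,z\rangle - \lim_{\ep\dar 0}\int_{\ep<|x|\les 1} i\langle z,x\rangle\,\nu_\mu(dx) = i\langle\gamma_\mu^0,z\rangle
\]
for all $z\in\R^d$, i.e.\ $\langle\gamma_\mu^0,z\rangle=\langle\gamma_\mu-\lim_{\ep\dar 0}\int_{\ep<|x|\les 1}x\,\nu_\mu(dx),z\rangle$ for all $z$, whence the asserted vector identity by taking $z$ to range over a basis. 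The main technical point — not really an obstacle but the place requiring care — is justifying the interchange of the limit $\ep\dar 0$ with the separation of the $i\langle z,x\rangle$ term: one must confirm that once the limit $\lim_{\ep\dar 0}\int_{\ep<|x|\les 1}x\,\nu_\mu(dx)$ exists by hypothesis, the remaining integral $\int_{\ep<|x|\les 1}(e^{i\langle z,x\rangle}-1-i\langle z,x\rangle)\nu_\mu(dx)$ converges absolutely and independently of how $\ep\dar 0$, so the two limits can be taken separately and recombined. This is immediate from the $O(|x|^2)$ bound, so the proof reduces to bookkeeping of the truncation terms.
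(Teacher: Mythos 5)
Your proposal is correct and is essentially the paper's own proof, which consists of the single instruction to compare the L\'evy--Khintchine representation \eqref{1.1} with the representation in the definition of weak drift; you have simply carried out that comparison explicitly, splitting at $|x|=1$, using the $O(|x|^2)$ bound to justify the regularization, and invoking uniqueness of the exponent to match the location terms. No gap.
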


\begin{proof}
Compare \eqref{1.1} and \eqref{2.1a}.
\end{proof}

The following result is basic in this paper.

\begin{thm}\label{2t1}
Let $\mu\in ID_0$.  Then the inversion $\mu'$ of $\mu$ has weak drift 
in $\R^d$ if and only if $\mu$ has weak mean in $\R^d$.
The inversion $\mu'$ has weak drift in $\R^d$ absolutely 
if and only if $\mu$ has weak mean in $\R^d$ absolutely.
If $\mu'$ has weak drift $\gm_{\mu'}^0$, then $\gm_{\mu'}^0=-m_{\mu}$, where
$m_{\mu}$ is the weak mean of $\mu$.
\end{thm}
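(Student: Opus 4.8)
The plan is to exploit the fact that the inversion $x\mapsto |x|^{-2}x$ sends $r\xi$ to $r^{-1}\xi$, so that the small-jump regime $|y|\les 1$ for $\mu'$ corresponds to the large-jump regime $|x|\ges 1$ for $\mu$, while the weight $|x|^2$ built into the definition of $\nu_{\mu'}$ exactly converts the radial factor $r^{-1}$ back into $r$. The central computation I would carry out first is a change of variables in the definition of $\nu_{\mu'}$: for $0<\ep<1$,
\[
\int_{\ep<|y|\les 1} y\,\nu_{\mu'}(dy)
=\int_{1\les |x|<\ep^{-1}} x\,\nu_{\mu}(dx).
\]
Indeed, writing $y=|x|^{-2}x$ one has $|y|=|x|^{-1}$, so the constraint $\ep<|y|\les 1$ becomes $1\les|x|<\ep^{-1}$, and the integrand transforms as $y\,|x|^2=|x|^{-2}x\cdot|x|^2=x$. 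Letting $\ep\dar 0$ (so $\ep^{-1}\to\infty$), the right-hand side differs from the weak-mean integral $\int_{1<|x|\les a}x\,\nu_{\mu}(dx)$ only by the finite boundary vector $\int_{|x|=1}x\,\nu_{\mu}(dx)$, which is finite because $\int(|x|^2\land1)\nu_{\mu}(dx)<\infty$ forces $\nu_{\mu}(\{|x|=1\})<\infty$. Hence the convergence \eqref{2.3} for $\mu'$ holds if and only if the convergence \eqref{2.1} for $\mu$ holds, giving the first assertion.

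For the absolute statement I would first identify a spherical decomposition of $\nu_{\mu'}$. Substituting a spherical decomposition $(\bar\nu_{\mu}(dr),\ld_r^{\mu}(d\xi))$ of $\nu_{\mu}$ into the definition of $\nu_{\mu'}$ yields
\[
\nu_{\mu'}(B)=\int_{\Rpl}\bar\nu_{\mu}(dr)\int_S 1_B(r^{-1}\xi)\,r^2\ld_r^{\mu}(d\xi),
\]
and the substitution $s=r^{-1}$ shows that one may take $\bar\nu_{\mu'}$ to be the image of $r^2\bar\nu_{\mu}(dr)$ under $r\mapsto r^{-1}$ and $\ld_s^{\mu'}=\ld_{1/s}^{\mu}$. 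With this choice the same $r\mapsto r^{-1}$ substitution gives
\[
\int_{(0,1]} s\,\bar\nu_{\mu'}(ds)\Bigl|\int_S\xi\,\ld_s^{\mu'}(d\xi)\Bigr|
=\int_{[1,\infty)} r\,\bar\nu_{\mu}(dr)\Bigl|\int_S\xi\,\ld_r^{\mu}(d\xi)\Bigr|,
\]
which differs from the absolute weak-mean integral \eqref{2.2} only by the finite contribution of the atom at $r=1$. Invoking the remark that \eqref{2.4} is independent of the chosen spherical decomposition, this proves the second assertion.

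Finally, for the identification $\gm_{\mu'}^0=-m_{\mu}$ I would combine three ingredients: Proposition~\ref{2p1} applied to $\mu'$, namely $\gm_{\mu'}^0=\gm_{\mu'}-\lim_{\ep\dar0}\int_{\ep<|y|\les1}y\,\nu_{\mu'}(dy)$; the identity $m_{\mu}=\gm_{\mu}+\lim_{a\to\infty}\int_{1<|x|\les a}x\,\nu_{\mu}(dx)$ recalled in this section; and the inversion relation $\gm_{\mu'}=-\gm_{\mu}+\int_{|x|=1}x\,\nu_{\mu}(dx)$. Using the first-step identity, the limit $\lim_{\ep\dar0}\int_{\ep<|y|\les1}y\,\nu_{\mu'}(dy)$ equals $\int_{|x|=1}x\,\nu_{\mu}(dx)+(m_{\mu}-\gm_{\mu})$; substituting this together with the expression for $\gm_{\mu'}$ into Proposition~\ref{2p1}, the two copies of $\int_{|x|=1}x\,\nu_{\mu}(dx)$ and the two copies of $\gm_{\mu}$ cancel, leaving $\gm_{\mu'}^0=-m_{\mu}$.

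I expect the main obstacle to be the careful bookkeeping of the unit-sphere boundary: the inversion interchanges the open and half-open conventions at $|x|=1$, turning $\les$ into $\ges$, so at each stage one must verify that the discrepancy is precisely the finite vector $\int_{|x|=1}x\,\nu_{\mu}(dx)$ (and that the spheres $|x|=a$ contribute nothing in the limit $a\to\infty$) and that this boundary term cancels in the final drift formula. A secondary point requiring care is checking that the proposed pair $(\bar\nu_{\mu'},\ld^{\mu'})$ is a genuine spherical decomposition—measurability of $s\mapsto\ld_{1/s}^{\mu}$ and $\sg$-finiteness of the pushforward $\bar\nu_{\mu'}$—so that the decomposition-independence remark legitimately applies.
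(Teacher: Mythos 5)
Your proposal is correct and follows essentially the same route as the paper: the change-of-variables identity $\int_{\ep<|y|\les 1} y\,\nu_{\mu'}(dy)=\int_{1\les |x|<\ep^{-1}} x\,\nu_{\mu}(dx)$, the pushforward spherical decomposition $(\bar\nu_{\mu'},\ld^{\mu'}_r)=(\bar\nu^{\sharp},\ld^{\mu}_{r^{-1}})$, and the combination of Proposition~\ref{2p1} with $\gm_{\mu'}=-\gm_{\mu}+\int_{|x|=1}x\,\nu_{\mu}(dx)$ are exactly the paper's three steps. Your extra attention to the boundary bookkeeping at $|x|=1$ and $|x|=a$ is sound and merely makes explicit what the paper leaves implicit.
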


Since $\mu''=\mu$, we can interchange \lq\lq weak drift" and \lq\lq weak mean" 
in the second and third sentences of the theorem.

\begin{proof}[Proof of Theorem \ref{2t1}]
We have
\[
\int_{\R^d} h(x)\nu_{\mu'}(dx)=\int_{\R^d} h(|x|^{-2}x)|x|^2\nu_{\mu}(dx)
\]
for any $\R^d$-valued function $h(x)$ on $\R^d$ satisfying 
$\int |h(x)|\nu_{\mu'}(dx)=\int |h(|x|^{-2}x)|\,|x|^2\break \nu_{\mu}(dx)<\infty$,
as in the proof of Proposition 2.1 of \cite{S11b}.  Hence
\begin{equation}\label{2t1.1}
\int_{\ep<|x|\les 1} x\nu_{\mu'}(dx)=\int_{1\les|x|<1/\ep} x\nu_{\mu}(dx).
\end{equation}
Thus the second sentence of the theorem follows.  The fourth sentence also
follows, since
\begin{align*}
\gm_{\mu'}^0&=\gm_{\mu'}-\lim_{\ep\dar0} \int_{\ep<|x|\les 1} x\nu_{\mu'}(dx)
=-\gm_{\mu}+\int_{|x|=1} x\nu_{\mu}(dx)-\lim_{a\to\infty} \int_{1\les |x|<a} x\nu_{\mu}(dx)\\
&=-\gm_{\mu}-\lim_{a\to\infty} \int_{1< |x|\les a} x\nu_{\mu}(dx)=-m_{\mu}.
\end{align*}
Let $(\bar\nu_{\mu}(dr),\ld_r^{\mu}(d\xi))$ be a spherical decomposition of
$\nu_{\mu}$.  Define 
\[
\bar\nu^{\sharp}(E)=\int_{\Rpl} 1_E (r^{-1})r^2\bar\nu_{\mu}(dr),\qquad
E\in\mcal B(\Rpl).
\]
Then
\[
\nu_{\mu'}(B)=\int_{\Rpl}\bar\nu_{\mu}(dr)\int_S 1_B(r^{-1}\xi) r^2\ld_r^{\mu}(d\xi)
=\int_{\Rpl}\bar\nu^{\sharp}(dr) \int_S 1_B (r\xi)\ld_{r^{-1}}^{\mu}(d\xi).
\]
Hence $\nu_{\mu'}$ has a spherical decomposition $(\bar\nu_{\mu'}(dr),\ld_r^{\mu'}(d\xi))$
with $\bar\nu_{\mu'}=\bar\nu^{\sharp}$ and $\ld_{r}^{\mu'}=\ld_{r^{-1}}^{\mu}$.
It follows that
\begin{align*}
\int_{(0,1]}r\bar\nu_{\mu'}(dr)\left|\int_{S} \xi\ld_r^{\mu'}(d\xi)\right|
&=\int_{(0,1]}r\bar\nu^{\sharp}(dr)\left|\int_{S} \xi\ld_{r^{-1}}^{\mu}(d\xi)\right|\\
&=\int_{[1,\infty)}r^{-1}r^2\bar\nu_{\mu}(dr)\left|\int_{S} \xi\ld_r^{\mu}(d\xi)\right|,
\end{align*}
which yields the third sentence of the theorem.
\end{proof}

\begin{prop}\label{2p2}
Let $\mu\in ID$. If $\mu$ is symmetric, then $\mu$ has weak drift\/ $0$ absolutely.
\end{prop}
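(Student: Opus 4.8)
The plan is to reduce everything to two structural facts about a symmetric $\mu\in ID$: its Lévy measure is symmetric, $\nu_\mu(-B)=\nu_\mu(B)$, and its location parameter vanishes, $\gm_\mu=0$. Both follow at once from the uniqueness of the triplet in \eqref{1.1}: if $X$ has law $\mu$ then $-X$ has triplet $(A_\mu,\nu_\mu^-,-\gm_\mu)$, where $\nu_\mu^-(B):=\nu_\mu(-B)$ (here the truncation region $\{|x|\les1\}$ is symmetric, so the location simply changes sign), and symmetry means $-X\eqd X$; comparing triplets gives $\nu_\mu^-=\nu_\mu$ and $\gm_\mu=-\gm_\mu$, hence $\gm_\mu=0$. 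I would record these two facts first.

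Next I would verify that $\mu$ has weak drift $0$. Since $\nu_\mu$ is symmetric and the integrand $x$ is odd, $\int_{\ep<|x|\les1}x\,\nu_\mu(dx)=0$ for every $\ep\in(0,1]$, so \eqref{2.3} holds with limit $0$ and $\mu$ has weak drift in $\R^d$. Proposition \ref{2p1} then yields $\gm_\mu^0=\gm_\mu-\lim_{\ep\dar0}\int_{\ep<|x|\les1}x\,\nu_\mu(dx)=\gm_\mu=0$.

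For the absolute part it suffices, by the remark following Definition \ref{2d2}, to exhibit a single spherical decomposition for which \eqref{2.4} holds; the key step is to produce one whose directional measures are themselves symmetric. Starting from an arbitrary decomposition $(\bar\nu_\mu(dr),\ld_r^\mu(d\xi))$, I would form the reflected family $\wt\ld_r^\mu(d\xi):=\ld_r^\mu(-d\xi)$ and check, via the substitution $\xi\mapsto-\xi$ together with $\nu_\mu^-=\nu_\mu$, that $(\bar\nu_\mu,\wt\ld_r^\mu)$ is again a spherical decomposition of $\nu_\mu$. Averaging, $\wh\ld_r^\mu:=\tfrac12(\ld_r^\mu+\wt\ld_r^\mu)$ gives a spherical decomposition $(\bar\nu_\mu,\wh\ld_r^\mu)$ of $\tfrac12(\nu_\mu+\nu_\mu^-)=\nu_\mu$ in which each $\wh\ld_r^\mu$ is symmetric on $S$, still satisfies $\wh\ld_r^\mu(S)=\ld_r^\mu(S)>0$, and depends measurably on $r$. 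For a symmetric measure on $S$ the odd map $\xi\mapsto\xi$ integrates to $0$, so $\int_S\xi\,\wh\ld_r^\mu(d\xi)=0$ for every $r$ and the integral in \eqref{2.4} is identically $0<\infty$. Combined with the previous paragraph, this shows $\mu$ has weak drift $0$ absolutely.

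The main obstacle I anticipate is purely the bookkeeping in this last step: confirming that the reflected and averaged directional families are \emph{legitimate} spherical decompositions in the sense of Section 2 — that the defining identity indeed reproduces $\nu_\mu$, that $\wh\ld_r^\mu(S)>0$, and that $r\mapsto\wh\ld_r^\mu$ remains a measurable family. Once the symmetric decomposition is in hand, the rest is immediate cancellation of odd integrands against symmetric measures.
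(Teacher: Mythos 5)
Your proof is correct and follows essentially the same route as the paper's: symmetry forces $\nu_{\mu}(-B)=\nu_{\mu}(B)$ and $\gm_{\mu}=0$, the odd integrand then makes $\int_{\ep<|x|\les 1}x\,\nu_{\mu}(dx)$ vanish identically so that Proposition \ref{2p1} gives weak drift $0$, and the vanishing of $\int_S\xi\,\ld_r^{\mu}(d\xi)$ yields \eqref{2.4}. The only point of divergence is the last step: the paper asserts directly that for $\bar\nu_{\mu}$-a.e.\ $r$ the \emph{given} $\ld_r^{\mu}$ is already symmetric --- which follows from the uniqueness of the spherical decomposition up to a factor $c(r)$, since $(\bar\nu_{\mu}(dr),\ld_r^{\mu}(-d\xi))$ is another decomposition of the same $\nu_{\mu}$ --- whereas you build a symmetrized decomposition $(\bar\nu_{\mu},\wh\ld_r^{\mu})$ by reflecting and averaging, verify \eqref{2.4} for it, and then invoke the remark after Definition \ref{2d2} that \eqref{2.4} is decomposition-independent. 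Both arguments are sound; yours trades the uniqueness statement for the explicitly stated decomposition-independence remark, at the price of the extra bookkeeping (measurability of $r\mapsto\wh\ld_r^{\mu}$, positivity of $\wh\ld_r^{\mu}(S)$) that you correctly anticipate, while the paper's version is shorter but leans implicitly on the uniqueness of the spherical decomposition.
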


\begin{proof}
Assume that $\mu$ is symmetric, that is, $\mu(-B)=\mu(B)$ for $B\in\mcal B(\R^d)$.
Then $\wh\mu(z)$ is real. Hence $\nu_{\mu}$ is symmetric and $\gm_{\mu}=0$.
Thus $\int_{\ep<|x|\les 1} x\nu_{\mu}(dx)=0$. Hence $\mu$ has weak drift
and it follows from Proposition \ref{2p1} that the weak drift is $0$.
Let $(\bar\nu_{\mu}(dr),\ld_r^{\mu}(d\xi))$ be a spherical decomposition of 
$\nu_{\mu}$.  The symmetry of $\nu_{\mu}$ yields that, for 
$\bar\nu_{\mu}$-a.\,e.\ $r$, $\ld_r^{\mu}$ is symmetric, so that 
$\int_S\xi\ld_r^{\mu}(d\xi)=0$.  Hence \eqref{2.4} holds.
\end{proof}

\medskip
As a digression we mention a property of weak drift, which is applicable
to characterization of strict $1$-stability.
We say that the L\'evy measure 
$\nu_{\mu}$ of $\mu\in ID$ is of polar product type
if there are a finite measure $\ld_{\mu}$ on $S$ and a $\sg$-finite measure 
$\bar\nu_{\mu}$ on $\Rpl$ such that $\nu_{\mu}(B)=\int_S \ld_{\mu}(d\xi) \int_{\Rpl}
1_B(r\xi) \bar\nu_{\mu}(dr)$ for $B\in\mcal B(\R^d)$.

\begin{prop}\label{2p3}
Let $\mu\in ID$ with $\nu_{\mu}$ of polar product type. 
Assume that\linebreak $\int_{|x|\les1} |x|\nu_{\mu}(dx)=\infty$. 
Then the following five conditions are equivalent.
\begin{enumerate}
\item  $\mu$ has weak drift in $\R^d$.
\item  $\lim_{\ep\dar 0} \int_{\ep<|x|\les 1} x\nu_{\mu}(dx)=0$.
\item  $\mu$ has weak drift in $\R^d$ absolutely.
\item  $\mu$ has weak drift absolutely and\/ $\lim_{\ep\dar 0} \int_{\ep<|x|\les 1} 
x\nu_{\mu}(dx)=0$.
\item  $\ld_{\mu}$ in the definition of polar product type satisfies 
$\int_S\xi\ld_{\mu}(d\xi)=0$. 
\end{enumerate}
\end{prop}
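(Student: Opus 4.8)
The plan is to exploit the product structure of $\nu_\mu$ to collapse all five conditions to a single vector equation. First I would fix the natural spherical decomposition: starting from $\nu_\mu(B)=\int_S\ld_\mu(d\xi)\int_\Rpl 1_B(r\xi)\bar\nu_\mu(dr)$ and interchanging the order of integration (legitimate because $\ld_\mu$ is finite and $\bar\nu_\mu$ is $\sg$-finite), I obtain a spherical decomposition $(\bar\nu_\mu(dr),\ld_r^\mu(d\xi))$ whose angular part is constant in $r$, namely $\ld_r^\mu=\ld_\mu$. Since the hypothesis $\int_{|x|\les1}|x|\nu_\mu(dx)=\infty$ rules out $\nu_\mu=0$, we have $0<\ld_\mu(S)<\infty$, so this is an admissible decomposition. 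Put $v=\int_S\xi\,\ld_\mu(d\xi)\in\R^d$; condition (v) is precisely $v=0$.

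Next I would record the two computations that the product structure makes available. Writing $x=r\xi$ and using $\ld_r^\mu=\ld_\mu$, for every $\ep\in(0,1]$
\[
\int_{\ep<|x|\les1}x\,\nu_\mu(dx)=\Bigl(\int_{\ep<r\les1}r\,\bar\nu_\mu(dr)\Bigr)\,v=:I(\ep)\,v,
\]
and likewise $\int_{|x|\les1}|x|\,\nu_\mu(dx)=\ld_\mu(S)\int_{(0,1]}r\,\bar\nu_\mu(dr)$. The standing assumption together with $0<\ld_\mu(S)<\infty$ forces $\int_{(0,1]}r\,\bar\nu_\mu(dr)=\infty$, hence $I(\ep)\uar\infty$ as $\ep\dar0$; on the other hand $I(\ep)<\infty$ for each fixed $\ep>0$ because $|x|\les\ep^{-1}|x|^2$ on $\{|x|>\ep\}$ and $\int(|x|^2\land1)\nu_\mu(dx)<\infty$, so the displayed integral is well defined. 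Finally, in this decomposition the left-hand side of \eqref{2.4} equals $|v|\int_{(0,1]}r\,\bar\nu_\mu(dr)$.

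Given these facts the equivalences are immediate. Since $I(\ep)\to\infty$, the vector $I(\ep)v$ converges in $\R^d$ exactly when $v=0$, in which case its limit is $0$; this yields (i)$\Leftrightarrow$(ii)$\Leftrightarrow$(v). Similarly $|v|\int_{(0,1]}r\,\bar\nu_\mu(dr)<\infty$ holds exactly when $v=0$, which is (iii)$\Leftrightarrow$(v); and (iv), being the conjunction of (ii) and (iii), is then equivalent to $v=0$ as well. The remark following Definition \ref{2d2} ensures that evaluating \eqref{2.4} in this particular decomposition entails no loss of generality.

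I expect no genuine obstacle: the entire content lies in the factorization forced by polar product type, which separates the radial integral from the fixed angular vector $v$. The only points demanding care are the bookkeeping that $I(\ep)$ is finite for each $\ep>0$ yet diverges as $\ep\dar0$, and the reading of \eqref{2.4} when $v=0$, where the integrand $r\,\bigl|\int_S\xi\,\ld_r^\mu(d\xi)\bigr|$ vanishes identically so that the integral is $0$ rather than an indeterminate $0\cdot\infty$.
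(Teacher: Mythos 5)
Your proposal is correct and follows essentially the same route as the paper: both proofs rest on the factorization $\int_{\ep<|x|\les1}x\,\nu_\mu(dx)=\bigl(\int_{(\ep,1]}r\,\bar\nu_\mu(dr)\bigr)\int_S\xi\,\ld_\mu(d\xi)$, the divergence of the radial integral forced by $\int_{|x|\les1}|x|\,\nu_\mu(dx)=\infty$, and the observation that the polar product decomposition is itself an admissible spherical decomposition. The only difference is organizational (you reduce all five conditions directly to $v=0$, while the paper chains implications), and your extra bookkeeping about $0<\ld_\mu(S)<\infty$ and the $0\cdot\infty$ reading of \eqref{2.4} is sound.
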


\begin{proof}
The implications (iv) $\Rightarrow$ (ii) $\Rightarrow$ (i) and 
(iv) $\Rightarrow$ (iii) $\Rightarrow$ (i) are obvious.  We have
$\int_{\ep<|x|\les 1} x\nu_{\mu}(dx)=\int_S \xi\ld_{\mu}(d\xi) \int_{(\ep,1]}
r\bar\nu_{\mu}(dr)$, since $\ld_{\mu}$ is of polar product type. 
Moreover, since $\int_{|x|\les1} |x|\nu_{\mu}(dx)=\infty$,
we have $\int_{(0,1]}r\bar\nu_{\mu}(dr)=\infty$.
It follows that (i) implies (v).   
As $(\bar\nu_{\mu}(dr), \ld_{\mu}(d\xi))$ gives a spherical decomposition of 
$\nu_{\mu}$, (v) implies (iv). This proof
is similar to that of Proposition 3.15 of \cite{S10}. 
\end{proof}

\begin{ex}\label{2e1}
For $0<\al<2$ the L\'evy measure $\nu_{\mu}$ of an $\al$-stable distribution
$\mu$ on $\R^d$ is of polar product type in the form 
$\nu_{\mu}(B)=\int_S \ld_{\mu}(d\xi) \int_{\Rpl} 1_B(r\xi) r^{-1-\al}dr$. 
The condition $\int_{|x|\les1} |x|\nu_{\mu}(dx)=\infty$ is satisfied if
and only if $\mu$ is nontrivial (that is, not a $\dl$-measure) and $1\les \al<2$. 
Let $\mu$ be a 1-stable distribution on $\R^d$.  If $d=1$, then $\mu$ is
strictly $1$-stable if and only if $\nu_{\mu}$ is symmetric.   If $d\ges 2$,
then symmetry of $\nu_{\mu}$ implies strict $1$-stability of $\mu$, but
the L\'evy measure of strictly $1$-stable distribution is not always symmetric.
A necessary and sufficient condition for $\mu$ to be strictly $1$-stable is
that $\int_S\xi\ld_{\mu}(d\xi)=0$ (see \cite{S99}).  Hence 
Proposition  \ref{2p3} gives equivalent characterizations of strict 
$1$-stability for a nontrivial $1$-stable distribution.  Similar characterizations 
using weak mean are given in Example 3.16 of \cite{S10}. 
\end{ex}


\section{Ranges of conjugates of some stochastic integral mappings}

Conjugates of stochastic integral mappings are introduced in \cite{S11b} 
in the following way.  A function $h(u)$ is said to satisfy Condition (C) 
if there are $a_h$ and $b_h$ with $0\les a_h<b_h\les \infty$ such that $h$
is defined on $(a_h,b_h)$, positive, and measurable, and 
\[
\min\left\{ \int_{a_h}^{b_h} h(u) u^2 du,\;\int_{a_h}^{b_h} h(u)du
\right\}<\infty. 
\]
For any $h$ satisfying Condition (C) we define a function 
$h^*$ as $a_{h^*}=1/b_h$, $b_{h^*}=1/a_h$, and
\[
h^*(u)=h(u^{-1}) u^{-4}, \qquad u\in(a_{h^*}, b_{h^*}).
\]
Then $h^*$ automatically satisfies Condition (C) and we have $(h^*)^*=h$.

Let $h$ be a function satisfying Condition (C).  Define a strictly decreasing 
continuous function $g_h(t)$ as $g_h(t)=\int_t^{b_h} h(u)du$ for 
$t\in (a_h,b_h)$ and let $c_h=g_h(a_h +)$. Let $t=f_h(s)$, $0< s<c_h$,
be the inverse function of $s=g_h(t)$, $a_h<t< b_h$.
Then $f_h(s)$ is a strictly decreasing continuous function with $f_h(0+)=b_h$
and $f_h(c_h -)=a_h$.  For all $\rh\in ID$, 
the stochastic integral $\int_p^q f_h(s) dX_s^{(\rh)}$
with respect to a L\'evy process $\{ X_s^{(\rh)}\}$ with distribution $\rh$
at time $1$ is defined either for $0\les p<q<c_h=\infty$ or for 
$0<p<q\les c_h<\infty$.
If $h$ satisfies $\int_{a_h}^{b_h} h(u) u^2 du<\infty$, then
the stochastic integral mapping $\Ph_{f_h}$ is defined as 
$\Ph_{f_h}\rh=\law\left(\int_0^{c_h -} f_h(s)dX_s^{(\rh)}\right)$
whenever $\int_0^{c_h -} f_h(s)dX_s^{(\rh)}$ is definable.
If $h$ satisfies $\int_{a_h}^{b_h} h(u) du<\infty$, then $c_h<\infty$ and
$\Ph_{f_h}$ is defined as 
$\Ph_{f_h}\rh=\law\left(\int_{0+}^{c_h} f_h(s)dX_s^{(\rh)}\right)$
whenever $\int_{0+}^{c_h} f_h(s)dX_s^{(\rh)}$ is definable.
The mapping $\Ph_{f_h}$ is written as $\Ld_h$.   Given a function $h$ 
satisfying Condition (C), we call $\Ld_{h^*}$ 
the conjugate of $\Ld_h$ and write $\Ld_h^*=\Ld_{h^*}=\Ph_{f_{h^*}}$. 
Since $(h^*)^*=h$, the conjugate of $\Ld_h^*$ equals $\Ld_h$.
In the analysis of $\mfr D(\Ld_h)$ and $\mfr R(\Ld_h)$ we use the following
restriction and extension of $\Ld_h$.  
We say that $\Ld_h\rh$ is absolutely
definable if $\int_0^{c_h} |\log\wh\rh(f_h(s)z)|ds<\infty$ for $z\in\R^d$.
Let $\mfr D^0(\Ld_h)=\{ \rh\in \mfr D(\Ld_h)\colon \text{$\Ld_h\rh$ is 
absolutely definable}\}$ and $\mfr R^0(\Ld_h)=\{ \Ld_h\rh\colon \rh\in
\mfr D^0(\Ld_h) \}$.
If $h$ satisfies $\int_{a_h}^{b_h} h(u) u^2 du<\infty$, then
we say that $\Ld_h \rh$ is essentially definable if,
for some $\R^d$-valued function $k(q)$ for $0<q<c_h$ and some
$\R^d$-valued random variable $Y$, $\int_0^q f_h(s)dX_s^{(\rh)}-k(q)$ 
converges to $Y$ in probability as $q\uar c_h$.
If $h$ satisfies $\int_{a_h}^{b_h} h(u) du<\infty$, then $c_h<\infty$ and
we say that $\Ld_h \rh$ is essentially definable if,
for some $\R^d$-valued function $k(p)$ for $0<p<c_h$ and some
$\R^d$-valued random variable $Y$, $\int_p^{c_h} f_h(s)dX_s^{(\rh)}-k(p)$ 
converges to $Y$ in probability as $p\dar 0$.
Let $\mfr D^{\mrm{e}} (\Ld_h)=\{\rh\in ID\colon \text{$\Ld_h \rh$ is 
essentially definable} \}$ and let $\mfr R^{\mrm{e}} (\Ld_h)$ be the
class of $\mu=\law (Y)$ where all 
$\rh\in \mfr D^{\mrm{e}} (\Ld_h)$ and all $k$ and $Y$ that can be 
chosen in the definition of essential definability of $\Ld_h \rh$ are 
taken into account.
Notice that $\mfr D^0 (\Ld_h)\subset \mfr D (\Ld_h) \subset \mfr D^{\mrm{e}} (\Ld_h)$
and $\mfr R^0 (\Ld_h)\subset \mfr R (\Ld_h) \subset \mfr R^{\mrm{e}} (\Ld_h)$.

In this section we are interested in the mappings 
$\bar \Ph_{p,\al}$ and $\Ps_{\al,\bt}$ as a continuation of \cite{S11b}. 
We will also mention the mapping $\Ld_{q,\al}$ in
Sections 4 and 5.  Their definitions are as follows.  

1. Given $p>0$ [resp.\ $q>0$] and $-\infty<\al<2$, 
let $a_h=0$, $b_h=1$, and $h(u)=\Gm(p)^{-1} (1-u)^{p-1} u^{-\al-1}$ 
[resp.\ $h(u)=\Gm(q)^{-1} (-\log u))^{q-1} u^{-\al-1}$].  
Then $h$ satisfies Condition (C) with $\int_0^1 h(u) u^2du<\infty$ and
$c_h$ is finite and infinite according as $\al<0$ or $\al\ges 0$;
$h^*$ satisfies condition (C) with $\int_1^{\infty} h^*(u)du<\infty$
and hence $c_{h^*}<\infty$.
The mapping $\Ld_h$ is denoted by $\bar \Ph_{p,\al}$ 
[resp.\ $\Ld_{q,\al}$].

2. Given $-\infty<\al<2$ and $\bt>0$, let $a_h=0$, $b_h=\infty$, and $h(u)=
u^{-\al-1} e^{-u^{\bt}}$.  Then $h$ satisfies Condition (C) 
with $\int_0^{\infty} h(u) u^2du<\infty$ and
$c_h$ is finite and infinite according as $\al<0$ or $\al\ges 0$;
$h^*$ satisfies condition (C) with $\int_0^{\infty} h^*(u)du<\infty$
and hence $c_{h^*}<\infty$.
The mapping $\Ld_h$ is denoted by $\Ps_{\al,\bt}$.

Let $\Ld_h$ equal $\bar \Ph_{p,\al}$ or $\Ps_{\al,\bt}$.  
The domains $\mfr D$, 
$\mfr D^0$, $\mfr D^{\mrm e}$ and the ranges $\mfr R$, $\mfr R^0$,
$\mfr R^{\mrm e}$ of both $\Ld_h$ and $\Ld_h^*$ are given description in 
\cite{S11b} if $\al\neq 1$.  In the case $\al=1$ the domains of 
$\Ld_h$ and $\Ld_h^*$ and the ranges $\mfr R(\Ld_h)$, $\mfr R^0(\Ld_h)$,
$\mfr R^{\mrm e}(\Ld_h)$, and $\mfr R^{\mrm e}(\Ld_h^*)$ are described
in \cite{S11b}, but $\mfr R(\Ld_h^*)$ and $\mfr R^0(\Ld_h^*)$ are not 
treated. Now we handle them, using the notion of weak drift.
In the description of $\mfr R^{\mrm e}(\Ld_h)$ and $\mfr R^{\mrm e}(\Ld_h^*)$,
we need to use some notions.  
A L\'evy measure $\nu_{\mu}$ is said to have a radial decomposition (rad.\ dec.) 
$(\ld(d\xi), \nu_{\xi}(dr))$ if $\nu_{\mu}(B)=\int_S \ld(d\xi)\int_{\Rpl}
1_B(r\xi)\nu_{\xi}(dr)$, $B\in\mcal B(\R^d)$, where $\ld$ is a $\sg$-finite 
measure on $S$ with $\ld(S)\ges 0$ and $\{\nu_{\xi}(dr)\cl \xi\in S\}$ is a 
measurable family of $\sg$-finite measures on $\Rpl$ with $\nu_{\xi}(\Rpl)>0$;
the decomposition is unique up to a change to  $(c(\xi)\ld(d\xi), 
c(\xi)^{-1}\nu_{\xi}(dr))$ with a positive, finite, measurable function 
$c(\xi)$ on $S$.  A $[0,\infty]$-valued function $\ph (u)$ on $\Rpl$ 
is said to be monotone of order $p>0$  
if $\ph (u)$ is locally integrable on $\Rpl$ and there is 
a locally finite measure $\sg$ on $\Rpl$ such that
$\ph (u)=\Gm(p)^{-1} \int_{(u,\infty)} (r-u)^{p-1} \sg(dr)$ for $u\in\Rpl$.
A function $\ph (u)$ on $\Rpl$ is said to be completely monotone if it is 
monotone of order $p$ for every $p>0$.  
In Theorem 4.4 and (4.23) of \cite{S11b} it is shown that if $\Ld_h=\bar\Ph_{p,1}$,
then
\begin{align}
\mfr R^{\mrm e}(\Ld_h)&=\{ \mu\in ID\colon \text{$\nu_{\mu}$ has a 
rad.\ dec.\ $(\ld(d\xi),u^{-2}k_{\xi}(u)du)$ such that $k_{\xi}(u)$}\nonumber\\
\label{3.1} &\phantom{XX}\text{is measurable in $(\xi,u)$ and monotone of order $p$ 
in $u\in\Rpl$}\},\\
\mfr R^{\mrm e}(\Ld_h^*)&=\{ \mu\in ID_0\colon \text{$\nu_{\mu}$ has a 
rad.\ dec.\ $(\ld(d\xi),u^{-2}k_{\xi}(u^{-1})du)$ such that $k_{\xi}(v)$}\nonumber\\
\label{3.2} &\phantom{XX}\text{is measurable in $(\xi,v)$ and monotone of 
order $p$ in $v\in\Rpl$}\}.
\end{align}
In Theorem 4.6 of \cite{S11b} it is shown that if $\Ld_h=\Ps_{1,\bt}$, then
\begin{align}
\mfr R^{\mrm e}(\Ld_h)&=\{ \mu\in ID\colon \text{$\nu_{\mu}$ has a 
rad.\ dec.\ $(\ld(d\xi),u^{-2}k_{\xi}(u^{\bt})du)$ such that $k_{\xi}(v)$}\nonumber\\
\label{3.1a} &\phantom{XX}\text{is measurable in $(\xi,v)$ and completely monotone  
in $v\in\Rpl$}\},\\
\mfr R^{\mrm e}(\Ld_h^*)&=\{ \mu\in ID_0\colon \text{$\nu_{\mu}$ has a 
rad.\ dec.\ $(\ld(d\xi),u^{-2}k_{\xi}(u^{-\bt})du)$ such that $k_{\xi}(v)$}\nonumber\\
\label{3.2a} &\phantom{XX}\text{is measurable in $(\xi,v)$ and completely monotone 
in $v\in\Rpl$}\}.
\end{align}

Our result is as follows.

\begin{thm}\label{3t1}
Let $\Ld_h=\bar \Ph_{p,1}$ with $p>0$ or $\Ld_h=\Ps_{1,\bt}$ with $\bt>0$.
Then,
\begin{align}
\label{3t1.1} \mfr R(\Ld_h)&=\{\mu\in \mfr R^{\mrm e}(\Ld_h)\colon 
\text{$\mu$ has weak mean $0$}\},\\
\label{3t1.2} \mfr R^0(\Ld_h)&=\{\mu\in \mfr R^{\mrm e}(\Ld_h)\colon 
\text{$\mu$ has weak mean $0$ absolutely}\},\\
\label{3t1.3} \mfr R(\Ld_h^*)&=\{\mu\in \mfr R^{\mrm e}(\Ld_h^*)\colon 
\text{$\mu$ has weak drift $0$}\},\\
\label{3t1.4} \mfr R^0(\Ld_h^*)&=\{\mu\in \mfr R^{\mrm e}(\Ld_h^*)\colon 
\text{$\mu$ has weak drift $0$ absolutely}\}.
\end{align}
\end{thm}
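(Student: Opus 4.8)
The four identities separate into two groups: \eqref{3t1.1}--\eqref{3t1.2} concern $\Ld_h$ and weak mean, while \eqref{3t1.3}--\eqref{3t1.4} concern $\Ld_h^*$ and weak drift. The plan is to prove the weak-mean identities directly and then transfer them to the weak-drift identities through the inversion, which is exactly the mechanism Theorem \ref{2t1} was prepared for. The transfer is clean because the range descriptions \eqref{3.1}--\eqref{3.2a} are themselves matched under inversion: if $\nu_\mu$ has a radial decomposition $(\ld(d\xi),u^{-2}k_\xi(u)\,du)$, then $\nu_{\mu'}$ has radial decomposition $(\ld(d\xi),u^{-2}k_\xi(u^{-1})\,du)$, so that $\mu\in\mfr R^{\mrm e}(\Ld_h)\cap ID_0$ if and only if $\mu'\in\mfr R^{\mrm e}(\Ld_h^*)$, and likewise with the roles of the maps exchanged. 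Combined with the equivalence $\rh\in\mfr D(\Ld_h)_0,\ \Ld_h\rh=\mu \iff \rh'\in\mfr D(\Ld_h^*)_0,\ \Ld_h^*\rh'=\mu'$ recalled above, and with the observation that for these maps $\int_{a_h}^{b_h}t^2h(t)\,dt<\infty$ makes the Gaussian part of $\Ld_h\rh$ a positive multiple of $A_\rh$ (so that $\mu\in\mfr R(\Ld_h)\cap ID_0$ forces $\rh\in ID_0$), one obtains $\mfr R(\Ld_h^*)=(\mfr R(\Ld_h)\cap ID_0)'$, and likewise for $\mfr R^{\mrm e}$ and—using the preservation of absolute definability under inversion from \cite{S11b}—for $\mfr R^0$.

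Granting these correspondences, \eqref{3t1.3} follows from \eqref{3t1.1}: for $\mu\in\mfr R^{\mrm e}(\Ld_h^*)$ (hence $\mu\in ID_0$ and $\mu'\in\mfr R^{\mrm e}(\Ld_h)$) one has $\mu\in\mfr R(\Ld_h^*)\iff\mu'\in\mfr R(\Ld_h)\iff\mu'$ has weak mean $0$; and by Theorem \ref{2t1} applied to $\mu'$ (using $\mu''=\mu$ and $\gm_\mu^0=-m_{\mu'}$), $\mu'$ has weak mean $0$ if and only if $\mu$ has weak drift $0$. Identity \eqref{3t1.4} follows the same chain with every convergence condition replaced by its absolute counterpart \eqref{2.2}/\eqref{2.4}, invoking the absolute part of Theorem \ref{2t1}. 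Thus the whole theorem reduces to \eqref{3t1.1}--\eqref{3t1.2}.

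For \eqref{3t1.1} I would fix $\mu\in\mfr R^{\mrm e}(\Ld_h)$, realized by some $\rh\in\mfr D^{\mrm e}(\Ld_h)$, centering $k(q)$, and limit $Y$, so that $\int_0^q C_\rh(f_h(s)z)\,ds-i\la k(q),z\ra\to C_\mu(z)$ for each $z$, where $C_\sg=\log\wh\sg$. The Gaussian and L\'evy parts of the left side converge independently of $k$, so $A_\mu$ and $\nu_\mu$ are already pinned down by membership in $\mfr R^{\mrm e}(\Ld_h)$; only the location is at issue, and $\Ld_h\rh$ is genuinely definable exactly when the centering $k(q)$ converges as $q\uar c_h$. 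The decisive step is to identify this convergence with $\mu$ having weak mean $0$. Using the radial form $\nu_\mu=(\ld(d\xi),u^{-2}k_\xi(u)\,du)$ from \eqref{3.1}/\eqref{3.1a}, one computes $\int_{1<|x|\les a}x\,\nu_\mu(dx)=\int_S\xi\,\ld(d\xi)\int_1^a u^{-1}k_\xi(u)\,du$ and matches its divergent part, as $a\to\infty$, against that of $k(q)$, as $q\uar c_h$, via the change of variables $s=g_h(t)$, $f_h(s)=t$. Since at $\al=1$ one has $\int_{a_h}^{b_h}t\,h(t)\,dt=\infty$ while $\int_{a_h}^{b_h}t^2h(t)\,dt<\infty$, the centering genuinely diverges in general, and the matching shows that it converges precisely when \eqref{2.1} holds and the weak mean vanishes, that is, when $\mu$ has weak mean $0$. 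I expect this matching of the centering asymptotics to be the main obstacle, since it is where the criticality $\al=1$—which makes the \emph{weak} mean, not the ordinary mean, the decisive quantity—must be controlled, and where the convergence must be kept conditional rather than absolute.

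For \eqref{3t1.2} the argument is identical with convergence upgraded to absolute convergence: absolute definability, $\int_0^{c_h}|C_\rh(f_h(s)z)|\,ds<\infty$, transforms under the same change of variables into the finiteness in \eqref{2.2} of $\int_{(1,\infty)}r\,\bar\nu_\mu(dr)\,\bigl|\int_S\xi\,\ld_r^\mu(d\xi)\bigr|$, i.e. into $\mu$ having weak mean $0$ absolutely, the radial decomposition again furnishing the bridge. With \eqref{3t1.1}--\eqref{3t1.2} in hand, the inversion step of the first two paragraphs delivers \eqref{3t1.3}--\eqref{3t1.4}.
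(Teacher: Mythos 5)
Your treatment of \eqref{3t1.3} and \eqref{3t1.4} is exactly the paper's: it invokes the range--inversion identities $\mfr R(\Ld_h^*)_0=(\mfr R(\Ld_h)_0)'$, $\mfr R^{\mrm e}(\Ld_h^*)_0=(\mfr R^{\mrm e}(\Ld_h)_0)'$ and $\mfr R^0(\Ld_h^*)_0=(\mfr R^0(\Ld_h)_0)'$ from Theorem 3.6 of \cite{S11b}, converts ``$\mu'$ has weak mean $0$ (absolutely)'' into ``$\mu$ has weak drift $0$ (absolutely)'' by Theorem \ref{2t1}, and drops the subscript $0$ because $\mfr R^{\mrm e}(\Ld_h^*)\subset ID_0$ (the paper reads this off \eqref{3.2} and \eqref{3.2a}; your Gaussian-part argument reaches the same conclusion). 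The only real divergence is upstream: the paper does not prove \eqref{3t1.1}--\eqref{3t1.2} at all but quotes them from Theorems 4.4 and 4.6 of \cite{S11b}, whereas you sketch a direct proof. Your sketch identifies the right mechanism (at $\al=1$ one has $\int t^2h(t)\,dt<\infty$ but $\int t\,h(t)\,dt=\infty$, so the centering $k(q)$ diverges and must be matched, via the change of variables $s=g_h(t)$, against $\int_{1<|x|\les a}x\,\nu_{\mu}(dx)$), but it remains a plan: the asymptotic matching is asserted rather than computed, and since $\mfr R^{\mrm e}(\Ld_h)$ is defined by quantifying over \emph{all} representing triples $(\rh,k,Y)$, the statement that ``$k(q)$ converges for the chosen $\rh$'' is not on its face equivalent to ``$\mu\in\mfr R(\Ld_h)$''. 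If, like the paper, you take \eqref{3t1.1}--\eqref{3t1.2} as given from \cite{S11b}, your argument is complete and coincides with the paper's; if you mean to reprove them, that portion still has to be carried out.
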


\begin{proof}
The assertions \eqref{3t1.1} and \eqref{3t1.2} are shown in 
Theorems 4.4 and 4.6 of \cite{S11b}.  In order to obtain 
\eqref{3t1.3} and \eqref{3t1.4} from these, we use the basic relations
of conjugates of stochastic integral mappings with inversions given by 
\[
\mfr R (\Ld_h^*)_0= (\mfr R (\Ld_h)_0)' \quad
\mfr R^{\mrm{e}} (\Ld_h^*)_0= (\mfr R^{\mrm{e}} (\Ld_h)_0)',\quad
\mfr R^0 (\Ld_h^*)_0= (\mfr R^0 (\Ld_h)_0)'
\]
in Theorem 3.6 of \cite{S11b}. We have
\begin{align*}
\mfr R (\Ld_h^*)_0&=\{\mu\in ID_0\colon \mu'\in \mfr R (\Ld_h)_0\}\\
&=\{\mu\in ID_0\colon \mu'\in \mfr R^{\mrm{e}} (\Ld_h)_0 
\text{ and $\mu'$ has weak mean $0$} \}\\
&=\{\mu\in ID_0\colon \mu\in \mfr R^{\mrm{e}} (\Ld_h^*)_0 
\text{ and $\mu$ has weak drift $0$} \}
\end{align*}
from \eqref{3t1.1}, 
since $\mu'$ has weak mean $0$ if and only if $\mu$ has weak 
drift $0$ by virtue of Theorem \ref{2t1}.
This proves \eqref{3t1.3}, since $\mfr R^{\mrm{e}} (\Ld_h^*)_0= 
\mfr R^{\mrm{e}} (\Ld_h^*)$ from  \eqref{3.2} and \eqref{3.2a}. 
The proof of \eqref{3t1.4} is similarly obtained from \eqref{3.2}, 
\eqref{3.2a}, and
\eqref{3t1.2}, using the fact in Theorem \ref{2t1} that
$\mu'$ has weak mean $0$ absolutely if and only if $\mu$ has weak 
drift $0$ absolutely.
\end{proof}

\section{Limits of some nested classes}

For a stochastic integral mapping $\Ph_f$ its iterations $\Ph_f^n$, 
$n=1,2,\ldots$, are defined as $\Ph_f^1=\Ph_f$ and $\Ph_f^{n+1} \rh=
\Ph_f(\Ph_f^n \rh)$ with
$\mfr D(\Ph_f^{n+1})=\{\rh\in \mfr D(\Ph_f^n) \cl \Ph_f^n \rh\in \mfr D(\Ph_f)\}$.  
Then we get nested classes 
$ID\supset \mfr R(\Ph_f)\supset \mfr R(\Ph_f^2)\supset \cdots$.  Let  
$\mfr R_{\infty}(\Ph_f)=\bigcap_{n=1}^{\infty} \mfr R(\Ph_f^n)$, the limit of the 
nested classes.  The class $\mfr R_{\infty}(\Ph_f)$ is possibly identical for 
different functions $f$. 
For example, it was shown in \cite{MS09} that $\mfr R_{\infty}(\Ph_f)$ equals
the class $L_{\infty}$ for many stochastic integral mappings
$\Ph_f$ known at that time.  Here $L_{\infty}$ is the class of completely selfdecomposable
distributions on $\R^d$, which is the smallest class closed under convolution
and weak convergence and containing all stable distributions on $\R^d$. 
A distribution $\mu
\in ID$ belongs to $L_{\infty}$ if and only if  
\begin{equation*}
\nu_{\mu}(B)=\int_{(0,2)} \Gm_{\mu}(d\bt)\int_S \ld_{\bt}^{\mu}(d\xi) \int_0^{\infty}
1_B(r\xi) r^{-\bt-1} dr,\quad B\in\mcal B(\R^d),
\end{equation*}
where $\Gm_{\mu}$ is a measure on 
$(0,2)$ satisfying $\int_{(0,2)} (\bt^{-1}+(2-\bt)^{-1}) \Gm_{\mu}(d\bt) <\infty$
and $\{\ld_{\bt}^{\mu}\cl \bt\in(0,2)\}$ is a measurable family of probability
measures on $S$.  This representation of $\nu_{\mu}$ is unique.
For a Borel subset $E$ of $(0,2)$, $L_{\infty}^E$ denotes
the class of $\mu\in L_{\infty}$ such that $\Gm_{\mu}$ is concentrated on $E$.

We are interested in what classes appear as $\mfr R_{\infty}(\Ld_h)$ and 
$\mfr R_{\infty}(\Ld_h^*)$ for stochastic integral mappings $\Ld_h$ associated
with functions $h$ satisfying Condition (C).  In \cite{S11a,S11b} the 
description of $\mfr R_{\infty}(\Ld_h)$ and 
$\mfr R_{\infty}(\Ld_h^*)$ is given for $\Ld_h$ equal to $\bar\Ph_{p,\al}$, 
$\Ld_{q,\al}$, and $\Ps_{\al,1}$ with $\al\in(-\infty,1)\cup(1,2)$, $p\ges 1$, 
and $q>0$.    
The  description of $\mfr R_{\infty}(\Ld_h)$ is also given in
the case $\al=1$, $p\ges 1$, and $q=1$ in \cite{S11a}.
Actually $\Ld_{1,\al}=\bar\Ph_{1,\al}$. 
Now let us treat $\mfr R_{\infty}(\Ld_h^*)$ for $\Ld_h$ equal to $\bar\Ph_{p,1}$ 
and $\Ps_{1,1}$ with $p\ges 1$. Again the notion of weak drift is crucial.

\begin{thm}\label{3t2}
Let $\Ld_h=\bar \Ph_{p,1}$ with $p\ges 1$ or $\Ld_h=\Ps_{1,1}$.  Then 
\begin{align}
\label{3t2.0} \mfr R_{\infty}(\Ld_h)&=
L_{\infty}^{(1,2)} \cap\{\mu\in ID\cl\text{$\mu$ has weak mean $0$} \},\\
\label{3t2.1}
\mfr R_{\infty}(\Ld_h^*)&=(L_{\infty}^{(0,1)})_0 \cap\{\mu\in ID\cl
\text{$\mu$ has weak drift $0$} \}.
\end{align}
\end{thm}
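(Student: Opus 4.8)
The plan is to reduce \eqref{3t2.1} to \eqref{3t2.0} through the inversion, exactly as \eqref{3t1.3} was reduced to \eqref{3t1.1} in the proof of Theorem \ref{3t1}. The identity \eqref{3t2.0} in the case $\al=1$ is furnished by \cite{S11a}, so I take it as given and concentrate on deriving \eqref{3t2.1}. The heart of the argument is the inversion identity
\[
\mfr R_{\infty}(\Ld_h^*) = (\mfr R_{\infty}(\Ld_h)_0)',
\]
after which \eqref{3t2.1} follows by translating the two defining conditions on $\mfr R_{\infty}(\Ld_h)_0$ across the inversion.

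First I would upgrade the single-step relation $\mfr R(\Ld_h^*)_0 = (\mfr R(\Ld_h)_0)'$ of \cite{S11b} to the level of iterates. Recall from the introduction that, on $ID_0$, $\rh\in\mfr D(\Ld_h)_0$ with $\Ld_h\rh=\mu$ if and only if $\rh'\in\mfr D(\Ld_h^*)_0$ with $\Ld_h^*\rh'=\mu'$; thus $\Ld_h$ and $\Ld_h^*$ are conjugate to each other by the inversion on $ID_0$, and $\Ld_h$ maps $ID_0$ into $ID_0$. Inducting on $n$ (checking at each step that $\Ld_h^n\rh\in ID_0$ lies in $\mfr D(\Ld_h)$ exactly when its inverse $(\Ld_h^n\rh)'=(\Ld_h^*)^n\rh'$ lies in $\mfr D(\Ld_h^*)$) gives $\mfr R((\Ld_h^*)^n)_0=(\mfr R(\Ld_h^n)_0)'$ for every $n$. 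Since inversion is a bijection of $ID_0$, it commutes with the intersection over $n$, yielding $\mfr R_{\infty}(\Ld_h^*)_0=(\mfr R_{\infty}(\Ld_h)_0)'$. Finally, \eqref{3.2} and \eqref{3.2a} give $\mfr R^{\mrm e}(\Ld_h^*)\subset ID_0$, hence $\mfr R((\Ld_h^*)^n)\subset ID_0$ for all $n$ and $\mfr R_{\infty}(\Ld_h^*)=\mfr R_{\infty}(\Ld_h^*)_0$, so the displayed identity holds.

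It then remains to compute the inversion of $\mfr R_{\infty}(\Ld_h)_0=(L_{\infty}^{(1,2)})_0\cap\{\mu\cl\text{$\mu$ has weak mean $0$}\}$. Two facts are needed. The first is that inversion reflects the index of complete selfdecomposability, $((L_{\infty}^{(1,2)})_0)'=(L_{\infty}^{(0,1)})_0$: writing $\nu_{\mu}$ in the $L_{\infty}$-form with radial part $r^{-\bt-1}dr$ and applying the defining substitution $r\xi\mapsto r^{-1}\xi$, $\nu_{\mu}(dx)\mapsto|x|^2\nu_{\mu}(dx)$, the change of variable $r\mapsto r^{-1}$ turns $r^{-\bt-1}dr$ into $r^{-(2-\bt)-1}dr$, so $\Gm_{\mu'}$ is the image of $\Gm_{\mu}$ under $\bt\mapsto 2-\bt$ (the integrability weight $\bt^{-1}+(2-\bt)^{-1}$ being invariant under this reflection), and the interval $(1,2)$ is carried to $(0,1)$. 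The second fact is Theorem \ref{2t1}: for $\mu\in ID_0$, $\mu'$ has weak mean $0$ if and only if $\mu$ has weak drift $0$. Combining, for $\mu\in ID_0$ we have $\mu\in(\mfr R_{\infty}(\Ld_h)_0)'$ iff $\mu'\in(L_{\infty}^{(1,2)})_0$ and $\mu'$ has weak mean $0$, iff $\mu\in(L_{\infty}^{(0,1)})_0$ and $\mu$ has weak drift $0$, which is \eqref{3t2.1}.

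I expect the main obstacle to be the passage to iterates, that is, verifying carefully that the domain-level conjugacy of $\Ld_h$ and $\Ld_h^*$ under inversion propagates through all $\Ld_h^n$ and $(\Ld_h^*)^n$ (including that $ID_0$ is preserved, so that the inversion is always applicable), since everything downstream is a clean bookkeeping of that identity together with the reflection $\bt\mapsto 2-\bt$ and Theorem \ref{2t1}. The reflection computation in the first fact is routine but must be carried out with the explicit $L_{\infty}$ representation; if it is already recorded in \cite{S07,S11b} it can simply be cited.
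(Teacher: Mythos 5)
Your proposal follows essentially the same route as the paper: take \eqref{3t2.0} from \cite{S11a}, pass to \eqref{3t2.1} via the identity $\mfr R_{\infty}(\Ld_h^*)_0=(\mfr R_{\infty}(\Ld_h)_0)'$, translate the two defining conditions using $((L_{\infty}^{(1,2)})_0)'=(L_{\infty}^{(0,1)})_0$ and Theorem \ref{2t1}, and finish by checking $\mfr R_{\infty}(\Ld_h^*)=\mfr R_{\infty}(\Ld_h^*)_0$. The only differences are that the paper simply cites Theorem 6.3 and Proposition 6.1 of \cite{S11b} for the first two of these facts, whereas you re-derive them (your induction on iterates is sound, the key point being that $A_{\Ld_h\sg}=\bigl(\int_0^{c_h}f_h(s)^2\,ds\bigr)A_{\sg}$ forces every intermediate iterate of a distribution landing in $ID_0$ to lie in $ID_0$, and your reflection computation $\bt\mapsto 2-\bt$ is correct); and for the last step the paper argues via $f_{h^*}(s)\asymp s^{-1}$ and Proposition 3.8 of \cite{S11b}, while you use $\mfr R((\Ld_h^*)^n)\subset\mfr R^{\mrm e}(\Ld_h^*)\subset ID_0$ from \eqref{3.2} and \eqref{3.2a} --- both are valid.
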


\begin{proof}
The description \eqref{3t2.0} of $\mfr R_{\infty}(\Ld_h)$ is
shown in Theorem 1.1 of \cite{S11a}. 
We have $\mfr R_{\infty}(\Ld_h^*)_0=(\mfr R_{\infty}(\Ld_h)_0)'$ in
Theorem 6.3 of \cite{S11b}, and $((L_{\infty}^{(1,2)})_0)'=
(L_{\infty}^{(0,1)})_0$ obtained from Proposition 6.1 of  \cite{S11b}.
Hence $\mfr R_{\infty}(\Ld_h^*)_0$ is identical with the right-hand
side of \eqref{3t2} by virtue of Theorem \ref{2t1}.  It remains to 
see $\mfr R_{\infty}(\Ld_h^*)=
\mfr R_{\infty}(\Ld_h^*)_0$.  But, since $f_{h^*}(s)\asymp s^{-1}$ as
$s\dar 0$ by (4.3) and (4.38) of \cite{S11b},
we have $\int_0^{c_{h^*}} f_{h^*} (s)^2 ds=\infty$, and hence 
$\mfr D(\Ld_h^*) \subset ID_0$ and  $\mfr R(\Ld_h^*) \subset ID_0$ 
as in Proposition 3.8 of \cite{S11b}. 
Now it follows that $\mfr R_{\infty}(\Ld_h^*)\subset ID_0$.
\end{proof}
 
\section{Weak law of large numbers and weak version of\\
Shtatland's theorem}

Shtatland \cite{Sh65} proves that, for a L\'evy process $\{X_t^{(\mu)}\cl
t\ges 0\}$ on $\R$,  $\lim_{\ep\dar 0} \ep^{-1}X_{\ep}^{(\mu)}\break =c$ 
almost surely for $c\in\R$  if and only if $\mu$ has drift $c$ and no
Gaussian part.  The \lq\lq if" part is easily extended to $\R^d$; 
see Theorem 43.20 of \cite{S99}.  
A weaker conclusion is that, for a L\'evy process $\{X_t^{(\mu)}\cl
t\ges 0\}$ on $\R^d$, if $\mu\in ID_0$ and $\mu$ has drift $c$,
then $\mcal L(\ep^{-1} X_{\ep}^{(\mu)})\to \dl_c$ as 
$\ep\dar 0$.
The following fact shows its connection with the weak law of large
numbers through inversion.

\begin{thm}\label{4p1}
Let $\mu\in ID_0$ and $c\in\R^d$.  Then $\mcal L(\ep^{-1} X_{\ep}^{(\mu)})
\to \dl_c$ as $\ep\dar 0$ if and only if $\mcal L(t^{-1} X_t^{(\mu')})
\to \dl_{-c}$ as $t\to \infty$.
\end{thm}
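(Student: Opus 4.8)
The plan is to realize both one-sided statements as instances of convergence of infinitely divisible laws to a point mass and then to transport the convergence across the inversion. Set $t=\ep^{-1}$, so that $\ep\dar 0$ corresponds to $t\to\infty$. Both $\mcal L(\ep^{-1}X_{\ep}^{(\mu)})$ and $\mcal L(t^{-1}X_t^{(\mu')})$ belong to $ID_0$, and I would describe each by its L\'evy measure and centering. A direct rescaling gives that $\mcal L(\ep^{-1}X_{\ep}^{(\mu)})$ has L\'evy measure $\nu_{\ep}(B)=\ep\,\nu_{\mu}(\ep B)$ and that $\mcal L(t^{-1}X_t^{(\mu')})$ has L\'evy measure $\rho_t(B)=\ep^{-1}\nu_{\mu'}(\ep^{-1}B)$. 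By the standard criterion for convergence of infinitely divisible distributions to $\dl_c$ (vague convergence of the L\'evy measures off the origin, the truncated second-moment condition, and convergence of the centering to $c$), each side of the theorem reduces to three conditions on these data.

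The heart of the argument is the identity $\rho_t=(\nu_{\ep})'$. Performing the change of variables $w=|u|^{-2}u$ in the defining relation $\int g\,d\nu_{\mu'}=\int g(|x|^{-2}x)|x|^2\nu_{\mu}(dx)$ of the inversion, and keeping track of the scaling factors, shows that the inversion of the scaled measure $\nu_{\ep}$ coincides with the scaled inverted measure $\rho_t$. Thus the two rescaled problems sit at dual scales and their L\'evy measures are exact mutual inversions.

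I would then transfer the three conditions across this inversion. Since $w=|u|^{-2}u$ exchanges small and large $|\cdot|$ and $\rho_t(dw)$ corresponds to $|u|^2\nu_{\ep}(du)$, the relations $\int_{|w|\les\eta}|w|^2\rho_t(dw)=\nu_{\ep}(\{|u|\ges\eta^{-1}\})$ and $\rho_t(\{|w|>\eta\})=\int_{0<|u|<\eta^{-1}}|u|^2\nu_{\ep}(du)$ show that the vague (large-jump) condition and the truncated-variance (small-jump) condition for $\nu_{\ep}$ are together equivalent to the corresponding pair for $\rho_t$, with the roles of the two interchanged; this interchange is precisely the passage between behaviour near $0$ and near $\infty$. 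For the centering, converting both exponents to the standard truncation $1_{\{|\cdot|\les 1\}}$ produces location vectors $b_{\ep}=\gm_{\mu}-\int_{1<|u|\les\ep^{-1}}u\,\nu_{\ep}(du)$ and $b_t'=\gm_{\mu'}+\int_{\ep<|w|\les 1}w\,\rho_t(dw)$; rewriting the second integral via $\rho_t=(\nu_{\ep})'$ and using $\gm_{\mu}+\gm_{\mu'}=\int_{|x|=1}x\,\nu_{\mu}(dx)$ from the definition of the inversion, I would obtain the exact identity $b_{\ep}+b_t'=\int_{|x|=\ep}x\,\nu_{\mu}(dx)$.

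The main obstacle is this centering step, namely showing that the boundary correction $\int_{|x|=\ep}x\,\nu_{\mu}(dx)$ tends to $0$; this is what produces the sign reversal $c\mapsto -c$ rather than a spurious shift. It is handled by the vague-convergence condition already assumed, since $\bigl|\int_{|x|=\ep}x\,\nu_{\mu}(dx)\bigr|\les\ep\,\nu_{\mu}(\{|x|=\ep\})\les\ep\,\nu_{\mu}(\{|x|>\ep/2\})=\nu_{\ep}(\{|u|>1/2\})\to 0$. Granting this, $b_{\ep}\to c$ is equivalent to $b_t'\to -c$, and all three conditions match up, proving that $\mcal L(\ep^{-1}X_{\ep}^{(\mu)})\to\dl_c$ implies $\mcal L(t^{-1}X_t^{(\mu')})\to\dl_{-c}$. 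The converse needs no extra work: because $\mu''=\mu$, applying this implication with $\mu'$ in place of $\mu$ and $-c$ in place of $c$ gives it by the involutive symmetry of the inversion.
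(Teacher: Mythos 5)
Your proof is correct, but it takes a more hands-on route than the paper. The paper's proof is a three-line computation: it invokes the identity $(T_b\mu)'=(T_{b^{-1}}(\mu'))^{b^2}$ (Proposition 2.4 of \cite{S11b}) together with $T_b(\mu^t)=(T_b\mu)^t$ to get the exact distributional identity $(\mcal L(\ep^{-1}X_\ep^{(\mu)}))'=\mcal L(t^{-1}X_t^{(\mu')})$ for $t=\ep^{-1}$, and then transports the convergence through the inversion using its continuity properties and $\dl_c'=\dl_{-c}$ (Proposition 2.1 of \cite{S11b}). Your central identity $\rho_t=(\nu_\ep)'$ is precisely the L\'evy-measure content of that same distributional identity, so the two arguments pivot on the same duality; the difference is that instead of citing the weak-convergence continuity of the inversion as a black box, you re-derive the needed special case by reducing both sides to the classical triplet criterion for convergence to $\dl_c$ and matching the three conditions by explicit change of variables. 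I checked the computations: $\rho_t=(\nu_\ep)'$ holds, the small-jump/large-jump conditions do swap as a pair under inversion (correctly stated as a joint equivalence rather than condition-by-condition), the centering identity $b_\ep+b_t'=\int_{|x|=\ep}x\,\nu_\mu(dx)$ is exact given $\gm_{\mu'}=-\gm_\mu+\int_{|x|=1}x\,\nu_\mu(dx)$, and the boundary term is killed by the assumed convergence. Two small points worth making explicit if you write this up: (1) the triplet criterion (Theorem 8.7 of \cite{S99}) is stated for a continuous truncation function, so you should note that switching to the indicator truncation costs only an $o(1)$ term here because the L\'evy mass away from the origin vanishes in the limit; (2) in the converse direction the vanishing of $\int_{|x|=\ep}x\,\nu_\mu(dx)$ follows from the pair of conditions for $\rho_t$ rather than directly from a vague condition on $\nu_\ep$ --- though your appeal to $\mu''=\mu$ sidesteps this entirely. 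What your approach buys is self-containedness (no reliance on the machinery of \cite{S11b}); what the paper's buys is brevity and reuse of already-established structure.
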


\begin{proof}
The core of the proof is the formula $(T_b\mu)'=(T_{b^{-1}}(\mu'))^{b^2}$ 
for $b>0$ and $\mu\in ID_0$ proved in Proposition 2.4 of \cite{S11b},
where $T_b$ is the dilation $(T_b\mu)(B)=\int_{\R^d} 1_B(bx)\mu(dx)$
and $\mu^t=\law (X_t^{(\mu)})$.  Notice that $T_b(\mu^t)=(T_b \mu)^t$.
We also use properties of the inversion in 
Proposition 2.1 (vi), (viii), and (ix) of \cite{S11b}.
Assume that $\mcal L(\ep^{-1} X_{\ep}^{(\mu)})
\to \dl_c$ as $\ep\dar 0$.  Then $(\mcal L(\ep^{-1} X_{\ep}^{(\mu)}))'
\to \dl_c'=\dl_{-c}$.  We have
\[
(\mcal L(\ep^{-1} X_{\ep}^{(\mu)}))'=(T_{\ep^{-1}}(\mu^{\ep}))'
=(T_{\ep}((\mu^{\ep})'))^{\ep^{-2}} =(T_{\ep}((\mu')^{\ep}))^{\ep^{-2}}
=(T_{\ep}(\mu'))^{\ep^{-1}},
\]
which is equal to $\mcal L(t^{-1}X_t^{(\mu')})$ for $t=\ep^{-1}$.
The converse is similar.
\end{proof}

Necessary and sufficient conditions for the weak law of large numbers
for L\'evy processes are as follows.

\begin{thm}\label{4t1}
Let $\mu\in ID$ and $c\in\R^d$.  The following three statements are equivalent.
\begin{enumerate}
\item The L\'evy process $\{X_t^{(\mu)}\cl
t\ges 0\}$ on $\R^d$ satisfies $\mcal L(t^{-1} X_t^{(\mu)})
\to \dl_c$ as $t\to \infty$.
\item The distribution $\mu$ has weak mean $c$ and
\begin{equation}\label{4t1.1}
\lim_{t\to\infty} t\int_{|x|>t} \nu_{\mu}(dx)=0.
\end{equation}
\item The distribution $\mu$ satisfies
\begin{gather}
\label{4t1.6} \lim_{t\to\infty} \int_{|x|\les t} x\mu(dx)=c,\\
\label{4t1.7} \lim_{t\to\infty} t\int_{|x|>t} \mu(dx)=0.
\end{gather}
\end{enumerate}
\end{thm}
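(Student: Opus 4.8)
The plan is to prove the two equivalences (i) $\Leftrightarrow$ (ii) and (i) $\Leftrightarrow$ (iii) separately; together they give the theorem. For the first I would apply the criterion for convergence of infinitely divisible distributions to $\eta_t=\mcal L(t^{-1}X_t^{(\mu)})$, and it is here that the weak mean enters naturally. For the second I would reduce the continuous-time statement to integer times and invoke the classical Kolmogorov--Feller weak law of large numbers for the i.i.d.\ increments of the process.

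For (i) $\Leftrightarrow$ (ii) I would first read off the L\'evy--Khintchine triplet of $\eta_t$. Since $\wh{\eta_t}(z)=\wh\mu(z/t)^t$, scaling the exponent \eqref{1.1} and substituting $y=x/t$ yields $A_{\eta_t}=t^{-1}A_\mu$, the L\'evy measure $\nu_{\eta_t}(B)=t\,\nu_\mu(tB)$ (the image of $t\nu_\mu$ under $x\mapsto x/t$), and, after rewriting $1_{\{|y|\les 1/t\}}=1_{\{|y|\les1\}}-1_{\{1/t<|y|\les1\}}$, the location $\gm_{\eta_t}=\gm_\mu+\int_{1<|x|\les t}x\,\nu_\mu(dx)$. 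Convergence $\eta_t\to\dl_c$ is governed by the standard criterion for convergence in $ID$ to the degenerate law $\dl_c$, whose triplet is $(0,0,c)$: (a) $\int g\,d\nu_{\eta_t}\to0$ for every bounded continuous $g$ vanishing near the origin; (b) $\lim_{\ep\dar0}\limsup_{t\to\infty}\bigl(\la z,A_{\eta_t}z\ra+\int_{|y|\les\ep}\la z,y\ra^2\nu_{\eta_t}(dy)\bigr)=0$ for each $z$; and (c) $\gm_{\eta_t}\to c$. A change of variables turns (a) into \eqref{4t1.1}, using $\nu_{\eta_t}(\{|y|>\dl\})=t\,\nu_\mu(\{|x|>t\dl\})$, and turns (c) into the assertion that $\mu$ has weak mean $c$, because $m_\mu=\gm_\mu+\lim_{a\to\infty}\int_{1<|x|\les a}x\,\nu_\mu(dx)$.

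The crux is that condition (b) is not an extra requirement but is implied by \eqref{4t1.1}. The Gaussian part contributes $t^{-1}\la z,A_\mu z\ra\to0$, so the matter reduces to showing $t^{-1}\int_{|x|\les t}|x|^2\nu_\mu(dx)\to0$. Splitting off $|x|\les1$ (a finite integral divided by $t$) and applying Fubini to the remainder, I would use
\[
\int_{1<|x|\les t}|x|^2\nu_\mu(dx)=\bigl(N(1)-N(t)\bigr)+\int_1^t 2rN(r)\,dr-(t^2-1)N(t),
\]
where $N(s)=\nu_\mu(\{|x|>s\})$. Dividing by $t$: the first term vanishes, the last behaves like $tN(t)\to0$, and the middle is controlled by a Ces\`aro argument since $2rN(r)\to0$. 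This estimate is the main analytic obstacle; once it is in hand, (ii) supplies (a), (b), (c) and hence (i), while (i) supplies (a) and (c) and hence (ii).

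For (i) $\Leftrightarrow$ (iii) I would pass from continuous to integer time. Writing $M_n=\sup_{n\les t<n+1}|X_t^{(\mu)}-X_n^{(\mu)}|$, these are i.i.d.\ copies of the a.s.\ finite running maximum over $[0,1]$, so $n^{-1}M_n\to0$ in probability and therefore $t^{-1}X_t^{(\mu)}\to c$ in probability as $t\to\infty$ if and only if $n^{-1}X_n^{(\mu)}\to c$ along the integers. The increments $X_n^{(\mu)}-X_{n-1}^{(\mu)}$ are i.i.d.\ with law $\mu$, so the classical Kolmogorov--Feller weak law gives $n^{-1}X_n^{(\mu)}\to c$ in probability if and only if $n\,\mu(\{|x|>n\})\to0$ and $\int_{|x|\les n}x\,\mu(dx)\to c$. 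Finally I would pass back to continuous $t$ by the elementary bounds $t\,\mu(\{|x|>t\})\les\tfrac{n+1}{n}\,n\mu(\{|x|>n\})$ and $\bigl|\int_{n<|x|\les t}x\,\mu(dx)\bigr|\les(n+1)\mu(\{|x|>n\})$ for $t\in[n,n+1)$, identifying the two conditions with \eqref{4t1.7} and \eqref{4t1.6}. The obstacle in this part is the continuous-to-discrete passage, which rests on the path regularity of the L\'evy process through the finiteness of the running maximum.
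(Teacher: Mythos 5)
Your argument is correct and follows the same two-pronged route as the paper's proof: (i) $\Leftrightarrow$ (ii) via the triplet convergence criterion applied to $\mcal L(t^{-1}X_t^{(\mu)})=(T_{t^{-1}}\mu)^t$, whose Gaussian part, L\'evy measure, and location parameter you compute exactly as the paper does, and (i) $\Leftrightarrow$ (iii) via reduction to integer times, the Kolmogorov--Feller weak law for the i.i.d.\ increments (the paper cites Feller p.~565 and Theorem 36.4 of Sato (1999)), and the same interpolation through the a.s.\ finiteness of $\sup_{s\les 1}|X_s^{(\mu)}|$. The only differences are at the level of detail: the paper works with the continuous centering $c(x)=1_{\{|x|\les1\}}(x)+|x|^{-1}1_{\{|x|>1\}}(x)$ so that Theorem 8.7 of Sato (1999) applies verbatim, whereas your discontinuous cutoff is harmless here only because the limit L\'evy measure is zero (under condition (a) the two location parameters differ by at most $\nu_{\eta_t}(\{|y|>1\})\to0$, a point worth one sentence); and your integration-by-parts/Ces\`aro verification that \eqref{4t1.1} forces $t^{-1}\int_{|x|\les t}|x|^2\nu_{\mu}(dx)\to0$ makes explicit the small-jump condition that the paper dismisses as ``always satisfied,'' which is a welcome clarification since after the change of variables the truncated second moment is taken over $\{|x|\les t\et\}$ and its vanishing genuinely uses \eqref{4t1.1}.
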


\begin{proof}
The equivalence of (i) and (ii) is as follows. 
It is convenient to use the L\'evy--Khintchine representation in the
form
\begin{equation*}
\wh\mu(z)=\exp\Bigl[ -\tfrac12 \langle z,A_{\mu} z\rangle +
\int_{\R^d}(e^{i\la z,x\ra}-1-i\la z,x\ra c(x))
\nu_{\mu}(dx)+i\la\gm^{\sharp}_{\mu},z\ra\Big]
\end{equation*}
for $\mu\in ID$ 
with $c(x)=1_{\{|x|\les1\}}(x)+|x|^{-1} 1_{\{|x|>1\}}(x)$ adopted by
Rajput and Rosinski \cite{RR89} and Kwapie\'n and Woyczy\'nski \cite{KW92}, 
as in (2.4) of \cite{S11b}. Let us call $\gm^{\sharp}_{\mu}$ the 
$\sharp$-location parameter of $\mu$; $\gm^{\sharp}_{\mu}$ is related to
the location parameter $\gm_{\mu}$ in \eqref{1.1} as
\begin{equation}\label{4t1.2}
\gm_{\mu}^{\sharp}=\gm_{\mu}+\int_{|x|>1} |x|^{-1}x\nu_{\mu}(dx).
\end{equation}
The dilation $T_b\mu$ of $\mu$ with $0<b<1$ has triplet
\[
A_{T_b\mu}=b^2 A_{\mu},\quad \nu_{T_b\mu}=T_b \nu_{\mu},\quad
\gm_{T_b\mu}=b\gm_{\mu}+b\int_{1<|x|\les b^{-1}} x\nu_{\mu}(dx)
\]
as in (2.7) of \cite{S11b}.  Hence, for $t>1$, $\mcal L(t^{-1}X_t^{(\mu)})
=T_{t^{-1}}(\mu^t)=(T_{t^{-1}}\mu)^t$ has Gaussian covariance matrix
$t^{-1}A_{\mu}$, L\'evy measure $t T_{t^{-1}}\nu_{\mu}$, and $\sharp$-location 
parameter 
\[
\gm_{\mu}+\int_{1<|x|\les t} x\nu_{\mu}(dx)+t\int_{|x|>t} |x|^{-1}
x\nu_{\mu}(dx)
\]
from \eqref{4t1.2}.  Since $c(x)$ is continuous on $\R^d$, 
we can use Theorem 8.7 of \cite{S99}
and see that $\mcal L(t^{-1} X_t^{(\mu)})
\to \dl_c$ as $t\to \infty$ if and only if
\begin{align}
\label{4t1.3} &\text{$t\nu_{\mu}(tB)\to 0$ for all $B\in\mcal B(\R^d)$ such
that $0$ is not in the closure of $B$},\\
\label{4t1.4} &\lim_{\et\dar0}\limsup_{t\to\infty} \Bigl( \la z,t^{-1}A_{\mu}
z\ra + t\int_{|x|<\et}\la z,t^{-1}x\ra^2 \nu_{\mu}(dx)\Bigr) =0\text{ for }
z\in\R^d,\\
\intertext{and}
\label{4t1.5} &\gm_{\mu}+\int_{1<|x|\les t} x\nu_{\mu}(dx)+t\int_{|x|>t} |x|^{-1}
x\nu_{\mu}(dx)\to c.
\end{align}
Condition \eqref{4t1.3} is the same as $t\int_{|x|>t\et} \nu_{\mu}(dx)\to 0$
for $\et>0$, which is equivalent to \eqref{4t1.1}.  Condition \eqref{4t1.4} is
always satisfied.  If \eqref{4t1.1} holds, then $t\int_{|x|>t} |x|^{-1}
x\nu_{\mu}(dx)\to 0$ and condition \eqref{4t1.5} is expressed as $\mu$ has
weak mean $c$.

Next, let us prove the equivalence of (i) and (iii). If (i) holds, then
$n^{-1}X_n^{(\mu)}\to c$ in probability as $n=1,2,\ldots\to\infty$.
Since $\{X_n^{(\mu)}\}$ is the sum of i.\,i.\,d.\ random variables, we see 
from the theorem in p.\,565 of Feller \cite{F71} and Theorem 36.4 of \cite{S99}
that (i) implies (iii).  These theorems also show that (iii) implies that
$n^{-1}X_n^{(\mu)}\to c$ in probability as $n\to\infty$.  Statement (i)
follows from this, since, for $n\les t<n+1$,
\begin{gather*}
t^{-1} X_t^{(\mu)}=t^{-1} ( X_t^{(\mu)}- X_n^{(\mu)}) +(t^{-1}-n^{-1}) X_n^{(\mu)}
+n^{-1}X_n^{(\mu)},\\
t^{-1} |X_t^{(\mu)}- X_n^{(\mu)}|\overset{\mrm{law}}{=} t^{-1} |X_{t-n}^{(\mu)}|
\les t^{-1}{\ts \sup_{s\les1}} |X_s^{(\mu)}|\to 0\quad\text{a.\,s.,}\quad t\to\infty,\\
\intertext{and}
|(t^{-1}-n^{-1}) X_n^{(\mu)}|\les n^{-2} |X_n^{(\mu)}|\to 0
\quad\text{in probability,}\quad t\to\infty.
\end{gather*}
This finishes the proof.
\end{proof}

As to the inversion version of Theorem \ref{4t1}, we can prove the 
equivalence of the analogues of (i) and (ii).

\begin{thm}\label{4t2}
Let $\mu\in ID_0$ and $c\in\R^d$.  The following two statements are equivalent.
\begin{enumerate}
\item The L\'evy process $\{X_t^{(\mu)}\cl
t\ges 0\}$ on $\R^d$ satisfies $\mcal L(\ep^{-1} X_{\ep}^{(\mu)})
\to \dl_c$ as $\ep\dar 0$.
\item  The distribution $\mu$ has weak drift $c$ and
\begin{equation}\label{4t2.1}
\lim_{\ep\dar 0} \ep^{-1}\int_{|x|\les\ep} |x|^2\nu_{\mu}(dx)=0.
\end{equation}
\end{enumerate}
\end{thm}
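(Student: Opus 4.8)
The plan is to reduce the small-time assertion for $\mu$ to the weak law of large numbers for the inversion $\mu'$, which is already characterized in Theorem \ref{4t1}, and then translate the resulting conditions on $\mu'$ back to conditions on $\mu$ using Theorem \ref{2t1} and the change-of-variables formula for inversion. Since $\mu\in ID_0$, Theorem \ref{4p1} applies and shows that statement (i) is equivalent to $\mcal L(t^{-1}X_t^{(\mu')})\to\dl_{-c}$ as $t\to\infty$. Because $\mu'\in ID_0\subset ID$, I can invoke Theorem \ref{4t1} with $\mu'$ in place of $\mu$ and $-c$ in place of $c$; its equivalence (i) $\Leftrightarrow$ (ii) then says this holds if and only if $\mu'$ has weak mean $-c$ and $\lim_{t\to\infty}t\int_{|x|>t}\nu_{\mu'}(dx)=0$. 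It therefore remains to recast each of these two conditions as a condition on $\mu$.

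For the first condition I would use Theorem \ref{2t1} in its interchanged form (legitimate since $\mu''=\mu$): $\mu'$ has weak mean if and only if $\mu$ has weak drift, and in that case the weak mean of $\mu'$ equals the minus of the weak drift $\gm_{\mu}^0$ of $\mu$. Hence $\mu'$ has weak mean $-c$ precisely when $\mu$ has weak drift $c$, matching the weak-drift requirement in (ii).

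For the second condition I would apply the change-of-variables identity $\int h(x)\nu_{\mu'}(dx)=\int h(|x|^{-2}x)|x|^2\nu_{\mu}(dx)$ recorded in the proof of Theorem \ref{2t1}. Taking $h=1_{\{|\cdot|>t\}}$ and using that $|x|^{-2}x$ has modulus $|x|^{-1}$, the preimage of $\{|y|>t\}$ is $\{|x|<1/t\}$, so that
\[
t\int_{|x|>t}\nu_{\mu'}(dx)=t\int_{|x|<1/t}|x|^2\nu_{\mu}(dx)=\ep^{-1}\int_{|x|<\ep}|x|^2\nu_{\mu}(dx),\qquad \ep=1/t.
\]
As $t\to\infty$ corresponds to $\ep\dar 0$, the tail condition on $\mu'$ becomes $\lim_{\ep\dar 0}\ep^{-1}\int_{|x|<\ep}|x|^2\nu_{\mu}(dx)=0$, which is \eqref{4t2.1} but with a strict inequality in the domain of integration.

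The only delicate point, and the one I expect to require care, is reconciling this strict inequality $|x|<\ep$ with the non-strict $|x|\les\ep$ in \eqref{4t2.1}. The two integrals differ by $\ep^{-1}\ep^2\nu_{\mu}(\{|x|=\ep\})$, and I would dispose of this by a monotonicity comparison: for $\ep'>\ep$ one has $\ep^{-1}\int_{|x|\les\ep}|x|^2\nu_{\mu}(dx)\les(\ep'/\ep)(\ep')^{-1}\int_{|x|<\ep'}|x|^2\nu_{\mu}(dx)$, and choosing $\ep'=2\ep$ shows that the non-strict version tends to $0$ whenever the strict one does; the reverse inequality is immediate. Thus \eqref{4t2.1} is equivalent to the strict-inequality limit, which completes the chain (i) $\Leftrightarrow$ (ii). Apart from this boundary bookkeeping and the correct accounting of signs and of the weak-mean/weak-drift interchange in Theorem \ref{2t1}, the argument is a direct concatenation of the three quoted theorems.
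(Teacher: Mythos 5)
Your proposal is correct and follows essentially the same route as the paper's own proof: reduce statement (i) via Theorem \ref{4p1} to the weak law of large numbers for $\mu'$, invoke the equivalence (i) $\Leftrightarrow$ (ii) of Theorem \ref{4t1}, and translate the two resulting conditions on $\mu'$ back to $\mu$ using Theorem \ref{2t1} and the inversion change-of-variables identity. The only difference is that you explicitly reconcile the strict inequality $|x|<\ep$ with the non-strict $|x|\les\ep$ in \eqref{4t2.1} (correctly, via the factor-of-two monotonicity comparison), a boundary detail the paper passes over in silence.
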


\begin{proof}
Combine Theorem \ref{4p1} with the equivalence of (i) and (ii) of
Theorem \ref{4t1}.  Then,
$\mcal L(\ep^{-1} X_{\ep}^{(\mu)})
\to \dl_c$ as $\ep\dar 0$ if and only if $\mu'$ has weak mean $-c$ and
$t\int_{|x|>t} \nu_{\mu'}(dx)\break \to 0$ as $t\to\infty$.
Use Theorem \ref{2t1} and that 
$t\int_{|x|>t} \nu_{\mu'}(dx)=t\int_{|x|< t^{-1}} |x|^2 \nu_{\mu} (dx)$.
Now we see that our assertion is true.  
\end{proof}

We give two final remarks concerning the conditions \eqref{4t1.1} and
\eqref{4t2.1}.

1. If $\mu\in ID$ has mean, then $\mu$ satisfies 
\eqref{4t1.1}, since $t\int_{|x|>t} \nu_{\mu}(dx)\les
 \int_{|x|>t}|x|\nu_{\mu}(dx)$. If $\mu\in ID$ has drift, then $\mu$ satisfies 
\eqref{4t2.1}, since $\ep^{-1}\int_{|x|\les\ep} |x|^2\nu_{\mu}(dx)\les
 \int_{|x|\les\ep}|x|\nu_{\mu}(dx)$.
 
2.  Let $\Ld_h$ be one of $\bar\Ph_{p,1}$
with $p\ges1$, $\Ld_{q,1}$ with $q\ges1$, and $\Ps_{1,\bt}$ with $\bt>0$.
Then any $\mu\in \mfr R^{\mrm{e}}(\Ld_h)$ satisfies \eqref{4t1.1} and
any $\mu\in \mfr R^{\mrm{e}}(\Ld_h^*)$ satisfies \eqref{4t2.1}.
To see this, first note that if $\mu\in \mfr R^{\mrm{e}}(\Ld_h)$ 
[resp.\ $\mfr R^{\mrm{e}}(\Ld_h^*)$] and if $\mu=\mu_0*\mu_1$ with 
$\mu_0\in ID_0$ and $\mu_1$ being Gaussian, then $\mu_0\in 
\mfr R^{\mrm{e}}(\Ld_h)_0$ [resp.\ $\mfr R^{\mrm{e}}(\Ld_h^*)_0$]; 
see Proposition 3.18 of \cite{S10}. Then, for $\mfr R^{\mrm{e}}(\Ld_h)$,
note that any function monotone of order $p\ges1$ is decreasing to $0$ 
(Corollary 2.6 of \cite{S10}) 
and use Lemma 4.2 of \cite{MU10}.  For $\mfr R^{\mrm{e}}(\Ld_h^*)$,
use the following analogue of Lemma 4.2 of \cite{MU10}:  {\it Under the
assumption that $\mu\in ID$ is such that $\nu_{\mu}$ has a 
rad.\ dec.\ $(\ld(d\xi),u^{-2}l_{\xi}(u)du)$ with $l_{\xi}(u)$
measurable in $(\xi,u)$ and increasing in $u\in\Rpl$, we have 
$l_{\xi}(0+)=0$ for $\ld$-a.\,e.\ $\xi$ if and only if \eqref{4t2.1} holds}.
This is because 
\[
\ep^{-1}\int_{|x|\les\ep}|x|^2 \nu_{\mu}(dx)
=\int_S \ld(d\xi)\int_{(0,1]}v^2 v^{-2} l_{\xi}(\ep v)dv.
\]
Now let $\mu\in \mfr R^{\mrm{e}}(\Ld_h^*)$.  In order to prove that $\mu$
satisfies \eqref{4t2.1}, let us show that $\nu_{\mu}$ has a rad.\ dec.\ 
$(\ld(d\xi),u^{-2}l_{\xi}(u)du)$ with $l_{\xi}(u)$
measurable in $(\xi,u)$, increasing in $u\in\Rpl$, and  
$l_{\xi}(0+)=0$ for $\ld$-a.\,e.\ $\xi$.  
Indeed, if $\Ld_h=\bar\Ph_{p,1}$ 
with $p\ges1$, then $\nu_{\mu}$ has a rad.\ dec.\ 
$(\ld(d\xi),u^{-p-1}k_{\xi}(u)du)$ with $k_{\xi}(u)$ increasing of
order $p$ on $\Rpl$ by Theorem 4.4 of \cite{S11b} and $l_{\xi}(u)
=u^{1-p}k_{\xi}(u)$ is increasing in $u$ and $l_{\xi}(0+)=0$, since
$u^{p-1} k_{\xi}(u^{-1})$ is monotone of order $p$ in $u\in\Rpl$ by
Proposition 4.3 of \cite{S11b}.
If $\Ld_h=\Ld_{q,1}$ with $q\ges1$, then $\nu_{\mu}$ 
has a rad.\ dec.\ $(\ld(d\xi),u^{-2}h_{\xi}(\log u)du)$ with $h_{\xi}(y)$ 
being increasing of order $q$ in $y\in \R$ by Theorem 4.5 of \cite{S11b}
and hence $h_{\xi}(y)$ is increasing and tends to $0$ as $y\to-\infty$ 
by Proposition 4.3 of \cite{S11b}.
If $\Ld_h=\Ps_{1,\bt}$ with $\bt>0$, then $\nu_{\mu}$ 
has a rad.\ dec.\ $(\ld(d\xi),u^{-2}k_{\xi}(u^{-\bt})du)$ with $k_{\xi}(v)$ 
completely monotone in $v\in\Rpl$ by Theorem 4.6 of \cite{S11b}
and thus $l_{\xi}(u)=k_{\xi}(u^{-\bt})$ is increasing in $u\in\Rpl$ and 
$l_{\xi}(0+)=0$.

\medskip
\noindent {\it Acknowledgments}.   The authors thank an anonymous 
referee for giving them valuable advice on improvement of the paper.

\end{document}